\documentclass[11pt]{article}
\usepackage{enumitem}

\usepackage{orcidlink}

\usepackage{amsmath}
\usepackage{amsthm}
\usepackage{amssymb}
\usepackage{bussproofs}
\usepackage{stmaryrd}
\usepackage{geometry}
\usepackage{stix}
\DeclareSymbolFont{LMletters}{OML}{lmm}{m}{it}
\SetSymbolFont{LMletters}{bold}{OML}{lmm}{b}{it}

\DeclareMathSymbol{\mi}{\mathalpha}{LMletters}{`i}
\DeclareMathSymbol{\mj}{\mathalpha}{LMletters}{`j}
\DeclareMathSymbol{\mk}{\mathalpha}{LMletters}{`k}
\DeclareMathSymbol{\ml}{\mathalpha}{LMletters}{`l}
\DeclareMathSymbol{\mr}{\mathalpha}{LMletters}{`r}

\usepackage{cleveref}
\usepackage{mathtools}
\usepackage{url}
\usepackage{float}

\usepackage{thmtools, thm-restate}

\usepackage{enumitem}

\makeatletter
\DeclareRobustCommand{\rvdots}{%
  \vbox{
    \baselineskip4\p@\lineskiplimit\z@
    \kern-\p@
    \hbox{.}\hbox{.}\hbox{.}
  }}
\makeatother

\theoremstyle{plain}
\newtheorem{theorem}{Theorem}
\newtheorem{lemma}[theorem]{Lemma}
\newtheorem{proposition}[theorem]{Proposition}
\newtheorem{corollary}[theorem]{Corollary}

\theoremstyle{definition}
\newtheorem{definition}[theorem]{Definition}

\newtheorem*{remark}{Remark}

\newtheorem{claim}{Claim}[theorem]

\newenvironment{claimproof}[1][\proofname]{%
  \begin{proof}[#1]%
}{%
  \end{proof}%
}

\providecommand{\keywords}[1]
{
  \small	
  \textbf{\textit{Keywords:}} #1
}

\newcommand{\reptitle}{}
\theoremstyle{plain}
\newtheorem*{reptheorem}{\reptitle}
\theoremstyle{definition}

\title{Constructive proofs for the standard translation of many-sorted to unsorted predicate logic}
\author{Hrafn Valtýr Oddsson\thanks{Institut für Philosophie I, Ruhr-Universit\"at Bochum. Email: hrafn.oddsson@rub.de} \ \orcidlink{0000-0003-1594-340X}}
\date{}

\begin{document}

\maketitle

\begin{abstract}
It is well known that many-sorted logic can be reduced to unsorted first-order logic by adding predicates for each sort, relativizing quantifiers to these predicates, and adding appropriate axioms governing their behavior. Existing constructive proofs for the correctness of this translation break down when the many-sorted language includes equality and the unsorted target calculus includes the usual rules/axioms for equality. We give an elementary proof in the form of an effective procedure that closes this gap. As an application, we give a fully syntactic justification of van Dalen's translation of second-order logic into unsorted first-order logic.

We also give a new proof for a claim made by Herbrand in his 1930 dissertation that, in the equality-free case, a sentence is derivable in many-sorted logic iff it is derivable in unsorted logic. Our proof avoids the heavy machinery of later proofs by Schmidt and Wang.

\end{abstract}

\keywords{Intuitionistic logic; many-sorted logic; multi-sorted logic; second-order logic}

\section{Introduction} 
Loosely speaking, many-sorted logic ($\mathrm{MSL}$) extends first-order ($\mathrm{FOL}$) logic by allowing quantification over different sorts of objects. For example, one can formalize geometry by having variables of one sort range over points, another over lines, yet another over circles, and so on.

\smallskip

The standard way of reducing many-sorted logic to unsorted is to introduce a predicate symbol $Q_\mi$ for each sort $\mi$. Then $Q_\mi t$ expresses that $t$ is of sort $\mi$. Instead of $\exists x^\mi(\ldots)$ and $\forall x^\mi(\ldots)$, one writes $\exists x(Q_\mi x\land\ldots)$ and $\forall x(Q_\mi x\to\ldots)$. This way, we obtain the translation $\varphi^*$ of a sentence $\varphi$. To go with this, we also need to add a set $J$ of sort axioms ensuring, at minimum, that each $Q_\mi$ is nonempty and that the constant and function symbols behave correctly with respect to the intended sorts. The correctness of this translation is then usually stated as follows:
$$\Gamma\vdash_\mathrm{MSL}\varphi \quad\text{iff} \quad \Gamma^*, J\vdash_\mathrm{FOL}\varphi^* .$$

Giving a semantic proof of this equivalence is relatively straightforward; see, e.g., \cite{ManzanoAranda2022}. Moreover, it is well understood how to give an effective procedure that takes a derivation of $\Gamma\vdash_\mathrm{MSL}\varphi$ and produces a derivation of $\Gamma^*, J\vdash_\mathrm{FOL}\varphi^*$. However, providing an effective procedure that takes a derivation in the one-sorted logic and produces a proof in the many-sorted logic has proved more intricate. In his PhD thesis \cite{THESE_1930__110__1_0}\footnote{See \cite{Herbrand1971} for an English translation.}, Herbrand sketches an argument along the following lines: Take a derivation of $\varphi^*$ from $\Gamma^*\cup J$ and replace each instance of $Q_\mi t$ with a tautology. This procedure transforms $\varphi^*$ into a sentence equivalent to $\varphi$, $\Gamma^*$ into a set of sentences equivalent to $\Gamma$, and $J$ into a set of tautologies. The result is a derivation of $\varphi$ from $\Gamma$ in the unsorted logic. He then concludes $\Gamma\vdash_\mathrm{MSL}\varphi$. In effect, he is claiming that
$$\Gamma\vdash_\mathrm{MSL}\varphi\quad\text{iff}\quad \Gamma\vdash_\mathrm{FOL}\varphi.$$

In \cite{Schmidt1938}, Schmidt points out that this step in Herbrand's argument is not justified. The reason being that a derivation on the $\mathrm{FOL}$ side may go through formulas that are not themselves well-formed many-sorted formulas, so the resulting derivation is not a derivation in the many-sorted logic. He therefore gives a rather involved procedure intended to transform an unsorted derivation into one containing only well-formed many-sorted formulas. However, this procedure falls short, and he later gives a corrected version \cite{Schmidt1951}. In the end, he gets a derivation of $\varphi$ from $\Gamma$ that can be read as a derivation in the many-sorted logic, thus establishing Herbrand's claim. 

In \cite{Wang1952}, Wang takes a similar approach to Schmidt. The main difference is that Wang uses Herbrand's theorem\footnote{"Herbrand's theorem" here refers to the fact that any derivable sentence in prenex normal form has a derivation of a certain shape.} to first obtain a one-sorted derivation, which is easier to transform into one that only contains well-formed many-sorted formulas. 

\smallskip

It is important to note at this point that this approach of replacing all instances of sort predicates with a tautology is incompatible with equality with its usual rules/axioms in the one-sorted logic. To see why, consider the formula $\forall x^\mi\forall y^\mi(x^\mi=y^\mi)$. It expresses that there is only one object of sort $\mi$. It does not follow that there is only one object of some other sort $\mk$. However, $$\big( \forall x^\mi\forall y^\mi(x^\mi=y^\mi)\big)^*=\forall x(Q_\mi x\to \forall y(Q_\mi y\to x=y)).$$ If we replace $Q_\mi x$ and $Q_\mi y$ with $\top$, we get a formula that is equivalent to $\forall x\forall y(x=y)$. Similarly, if we add additional axioms to $J$ expressing that the sorts are mutually disjoint, such as $ \forall x\neg(Q_\mi x\land Q_\mj x)$, then replacing the sort predicates with $\top$ would result in a contradiction. In either case, the proofs by Schmidt and Wang break down.

\smallskip

It is also worth mentioning that Gilmore \cite{Gilmore1958} points out that Wang's proof requires all relation symbols to be \emph{strict}, in the sense that every argument place is assigned a fixed sort. So, the proof does not cover cases where relation symbols can accept terms of different sorts in the same argument place. For example, Wang treats \emph{the simple theory of types} where formulas of the form $x^n\epsilon y^{n+1}$ are considered well-formed for any $n$. So, under Wang's restriction, one would need infinitely many membership predicates $\epsilon_n$, one for each $n\in \mathbb{N}$. Gilmore provided a proof that covers these cases. Still, his proof does not handle equality. 

\smallskip

In \cite{vanDalen1980}, van Dalen provides a translation of second-order logic into unsorted first-order logic. His translation can be decomposed into two steps. First, translate second-order logic into many-sorted first-order logic with one sort for individuals and one sort for $n$-ary predicates for each $n$, together with relation symbols $Ap_n$ where $Ap_n(X^n, x_1, \dots, x_n)$ expresses that the predicate $X^n$ holds of the individuals $x_1, \dots, x_n$. Next, apply the standard reduction from many-sorted to unsorted first-order logic with equality, together with axioms stating that the sort predicates are pairwise disjoint. He uses this translation to establish that his proof system is complete with respect to the given semantics. He does not provide a proof of correctness for the translation, stating that it is a ``tedious but routine job.''

Nour and Raffalli point out in \cite{NourRaffalli2003} that this task is not as routine as van Dalen makes it out to be, writing: ``It is not even clear that the proof in [12] which is only sketched can be completed into a correct proof (at least the authors do not know how to end his proof).''\footnote{Here, ``[12]'' is \cite{vanDalen1994}.} They note that formulas appearing in a derivation of $\varphi^*$ are not necessarily of the form $\psi^*$. They therefore opt for an alternative translation: rather than adding sort predicates, they let the placement of a variable in atomic formulas determine its role. In this way, one variable can play different roles in the same formula. This allows them to give a reverse coding $A^\diamond$ for any first-order $A$ in such a way that $\varphi$ and $\varphi^{*\diamond}$ are equivalent. 

\smallskip

In this paper, we give two constructive proofs, both of which work for classical and intuitionistic logic. The first proof establishes the correctness of the standard translation in the presence of equality, as well as relation and function symbols that allow more than one combination of sorts in their argument places. We then extend this result to accommodate additional sort axioms, including those expressing that the sorts are mutually disjoint. As an application, we use it to give a fully syntactic justification of van Dalen's translation of second-order logic into unsorted first-order logic. Our second proof adapts the argument of Nour and Raffalli to $\mathrm{MSL}$ and gives a new proof of Herbrand's claim that
$$\Gamma\vdash_\mathrm{MSL}\varphi \quad\text{iff} \quad \Gamma\vdash_\mathrm{FOL}\varphi,$$
whenever $\Gamma\cup\{\varphi\}$ is a set of equality-free $\mathrm{MSL}$-sentences with only strict relation and function symbols. This proof gives a simpler alternative to the ones by Schmidt and Wang.

\section{Preliminaries}
We introduce both the single- and many-sorted logics and the standard translation between them. Our notion of many-sorted logic is slightly more general than usual. In particular, we allow both relation and function symbols that admit different combinations of sorts in their argument places. We also consider two notions of equality: a strict one, where only terms of the same sort can be compared, and a liberal one, where terms of different sorts may be compared. Each of these features appears in the literature (see \cite{Gilmore1958}, \cite{GOGUEN1992217}, and \cite{ManzanoAranda2022}, respectively).

\subsection{$\mathrm{FOL}$}

\begin{definition}
The logical signs of $\mathrm{FOL}$ are:
\begin{itemize}
    \item a countably infinite set of variables $\mathrm{Var}:=\{x_0, x_1,\ldots\}$;
    \item the connectives $\land,\,\lor,\,\to,\,$ and $\bot$; 
    \item the quantifiers $\exists$ and $\forall$;
    \item the equality sign $=$.
\end{itemize}

\noindent An \emph{unsorted signature} $L$ consists of a countable set $\mathrm{Rel}$ of relation symbols and a countable set $\mathrm{Fun}$ of function symbols, each equipped with an arity $n\geq 0$. Relation symbols of arity $0$ are called \emph{propositional variables}, and function symbols of arity $0$ are called \emph{constants}. 

The notions of \emph{$L$-terms}, \emph{$L$-formulas}, and \emph{$L$-sentences} are defined as usual. We use upper-case letters from the beginning of the alphabet, such as $A$, $B$, and $C$, to refer to formulas and sentences. We write $\neg A$ for $A\to\bot$, $\top$ for $\bot\to\bot$, and $A\leftrightarrow B$ for $(A\to B)\land(B\to A)$.

If $t_1,\dots,t_n$ are terms and $x_1,\dots, x_n$ are variables, then we denote the result of simultaneously replacing each free occurrence of $x_k$ with $t_k$ in a term $t$ by $t[t_1/x_1,\dots,t_n/x_n]$ and in a formula $A$ by $A[t_1/x_1,\dots,t_n/x_n]$. Whenever we write $A[t_1/x_1,\dots,t_n/x_n]$, we assume that each $t_k$ is free for $x_k$ in $A$.
\end{definition}

We write $\Pi\vdash_{\mathrm{FOL}} A$ to indicate that $A$ is derivable from $\Pi$ using the rules from  Table~\ref{FOLrules} together with (the universal closure of) the following axioms for equality:

\begin{itemize}
    \item $x= x$
    \item $x=y\to y=x$
    \item $x=y\land y=z\to x=z$
    \item $x_1= y_1\land \ldots \land x_n= y_n\rightarrow \left(R(x_1,\ldots,x_n)\to R(y_1,\ldots,y_n)\right)$
    \item $x_1= y_1\land \ldots \land x_n= y_n\rightarrow f(x_1,\ldots,x_n)= f(y_1,\ldots,y_n)$
\end{itemize}

\begin{remark}
We work with classical logic throughout this paper. For the intuitionistic case, simply remove $RAA$ from Table~\ref{FOLrules} and all the results remain valid.
\end{remark}

\begin{table}
\centering
\caption{Natural deduction rules for $\mathrm{FOL}$}
\label{FOLrules}
\vspace{1em}
\footnotesize\begin{tabular}{ccc}

\begin{minipage}{0.26\textwidth}
\begin{prooftree}
\AxiomC{$A$}
\AxiomC{$B$}
\RightLabel{${\wedge} I$}
\BinaryInfC{$A \wedge B$}
\end{prooftree}
\end{minipage} &
\begin{minipage}{0.26\textwidth}
\begin{prooftree}
\AxiomC{$A \wedge B$}
\RightLabel{${\wedge} E_1$}
\UnaryInfC{$A$}
\end{prooftree}
\end{minipage} &
\begin{minipage}{0.26\textwidth}
\begin{prooftree}
\AxiomC{$A \wedge B$}
\RightLabel{${\wedge} E_2$}
\UnaryInfC{$B$}
\end{prooftree}
\end{minipage} \\[3em]

\begin{minipage}{0.26\textwidth}
\begin{prooftree}
\AxiomC{$A$}
\RightLabel{${\vee} I_1$}
\UnaryInfC{$A \vee B$}
\end{prooftree}
\end{minipage} &
\begin{minipage}{0.26\textwidth}
\begin{prooftree}
\AxiomC{$B$}
\RightLabel{${\vee} I_2$}
\UnaryInfC{$A \vee B$}
\end{prooftree}
\end{minipage} &
\begin{minipage}{0.26\textwidth}
\begin{prooftree}
\AxiomC{$A \vee B$}
\AxiomC{$[A]$}
\noLine
\UnaryInfC{$\vdots$}
\noLine
\UnaryInfC{$C$}
\AxiomC{$[B]$}
\noLine
\UnaryInfC{$\vdots$}
\noLine
\UnaryInfC{$C$}
\RightLabel{${\vee} E$}
\TrinaryInfC{$C$}
\end{prooftree}
\end{minipage} \\[3em]

\begin{minipage}{0.26\textwidth}
\begin{prooftree}
\AxiomC{$[A]$}
\noLine
\UnaryInfC{$\vdots$}
\noLine
\UnaryInfC{$B$}
\RightLabel{${\to} I$}
\UnaryInfC{$A \to B$}
\end{prooftree}
\end{minipage} &
\begin{minipage}{0.26\textwidth}
\begin{prooftree}
\AxiomC{$A \to B$}
\AxiomC{$A$}
\RightLabel{${\to} E$}
\BinaryInfC{$B$}
\end{prooftree}
\end{minipage} &
\begin{minipage}{0.26\textwidth}
\begin{prooftree}
\AxiomC{$\bot$}
\RightLabel{${\bot} E$}
\UnaryInfC{$A$}
\end{prooftree}
\end{minipage} \\[3em]

\begin{minipage}{0.26\textwidth}
\begin{prooftree}
\AxiomC{$A$}
\RightLabel{${\forall} I$}
\UnaryInfC{$\forall x\, A$}
\end{prooftree}
\end{minipage} &
\begin{minipage}{0.26\textwidth}
\begin{prooftree}
\AxiomC{$\forall x\, A$}
\RightLabel{${\forall} E$}
\UnaryInfC{$A[t/x]$}
\end{prooftree}
\end{minipage} &
\begin{minipage}{0.26\textwidth}
\begin{prooftree}
\AxiomC{$[\neg A]$}
\noLine
\UnaryInfC{$\vdots$}
\noLine
\UnaryInfC{$\bot$}
\RightLabel{$RAA$}
\UnaryInfC{$A$}
\end{prooftree}
\end{minipage} \\[3em]

\begin{minipage}{0.26\textwidth}
\begin{prooftree}
\AxiomC{$A[t/x]$}
\RightLabel{${\exists} I$}
\UnaryInfC{$\exists x\, A$}
\end{prooftree}
\end{minipage} &
\begin{minipage}{0.26\textwidth}
\begin{prooftree}
\AxiomC{$\exists x\, A$}
\AxiomC{$[A]$}
\noLine
\UnaryInfC{$\vdots$}
\noLine
\UnaryInfC{$C$}
\RightLabel{${\exists} E$}
\BinaryInfC{$C$}
\end{prooftree}
\end{minipage} &
\begin{minipage}{0.26\textwidth}
\end{minipage} \\[2em]

\end{tabular}

\medskip

\begin{minipage}{0.88\textwidth}
\footnotesize
In $\forall I$ and $\exists E$, the variable $x$ may not occur free in any
undischarged assumption. In $\exists E$, $x$ may also not occur free in $C$.
\end{minipage}
 
\end{table}

\subsection{$\mathrm{MSL}$}

The logical signs of $\mathrm{MSL}$ are the same as those of $\mathrm{FOL}$: the set of variables $\mathrm{Var}$, the connectives $\land$, $\lor$, $\to$, $\bot$, the quantifiers $\exists$ and $\forall$, and the equality sign $=$.\footnote{We treat liberal equality in a later section.}

\begin{definition}\label{def:signature}
A \emph{many-sorted signature} $\mathcal{L}$ consists of an unsorted signature $L$, together with:
\begin{itemize}
    \item a nonempty countable set $\mathrm{Sort}$ of sorts;
    \item pairwise disjoint countably infinite sets $\mathrm{Var}_\mi\subseteq \mathrm{Var}$ of \emph{variables of sort $\mi$}, one for each $\mi\in \mathrm{Sort}$; and
    \item a function $\mathrm{rank}$ that assigns:
    \begin{itemize}
        \item to each $n$-ary relation symbol $R\in L$, a nonempty set $\mathrm{rank}(R)\subseteq \mathrm{Sort}^n$,
        \item to each $n$-ary function symbol $f\in L$, a nonempty partial function $\mathrm{rank}(f)\subseteq \mathrm{Sort}^n\times \mathrm{Sort}$. That is, for each $(\mi_1,\ldots ,\mi_n)\in \mathrm{Sort}^n$, there is at most one $\mi\in \mathrm{Sort}$ such that $(\mi_1,\ldots \mi_n,\mi)\in\mathrm{rank}(f)$. We denote this sort by $\mathrm{rank}(f)(\mi_1,\dots,\mi_n)$.
    \end{itemize}
\end{itemize}
We write $x^\mi$ to indicate that $x\in \mathrm{Var}_\mi$, and we set $\mathrm{Var}_{\mathcal{L}}:=\bigcup_{\mi\in\mathrm{Sort}}\mathrm{Var}_\mi$.
\end{definition}

\begin{definition}\label{def:terms}
The \emph{well-sorted terms} of $\mathcal{L}$ and their sorts are defined as follows:
\begin{enumerate}
    \item Each variable $x\in \mathrm{Var}_\mi$ is a well-sorted term of sort $\mi$.
    \item If $f$ is an $n$-ary function symbol, $t_1,\ldots, t_n$ are well-sorted terms of sorts $\mi_1,\ldots,\mi_n$ respectively, and $(\mi_1,\ldots,\mi_n,\mi)\in \mathrm{rank}(f)$, then $f(t_1,\ldots,t_n)$ is a well-sorted term of sort $\mi$.
\end{enumerate}
We write $\mathrm{sort}(t)$ for the sort of a well-sorted term $t$, and $t^\mi$ to indicate that $\mathrm{sort}(t)=\mi$.
\end{definition}

\begin{definition}\label{def:formulas}
The \emph{well-sorted formulas} of $\mathcal{L}$ are defined as follows:
\begin{enumerate}
    \item $\bot$ is well sorted.
    \item If $t_1$ and $t_2$ are terms with the same sort, then $t_1=t_2$ is well sorted.
    \item If $R$ is an $n$-ary relation symbol, $t_1,\ldots, t_n$ are well-sorted terms of sorts $\mi_1,\ldots,\mi_n$ respectively, and $(\mi_1,\ldots,\mi_n)\in \mathrm{rank}(R)$, then $R(t_1,\ldots,t_n)$ is well sorted.
    \item If $\varphi$ and $\psi$ are well sorted, then so is $\varphi\star\psi$ for $\star\in\{\land,\lor,\to\}$.
    \item If $\varphi$ is well sorted, then so are $\exists x^\mi\varphi$ and $\forall x^\mi\varphi$.
\end{enumerate}
A \emph{well-sorted sentence} is a well-sorted formula with no free variables. By \emph{$\mathcal{L}$-formulas},  \emph{$\mathcal{L}$-sentences}, and \emph{$\mathcal{L}$-terms}, we mean well-sorted ones.
\end{definition}

We write $\Gamma\vdash_{\mathrm{MSL}}^{\mathcal{L}}\varphi$ to indicate that $\varphi$ is derivable from $\Gamma$ using the propositional rules from Table~\ref{FOLrules} and the quantifier rules from Table~\ref{MSLrules}, restricted to well-sorted $\mathcal{L}$-formulas, together with the (universal closure of the) following axioms:
\begin{itemize}
    \item $x^\mi= x^\mi$
    \item $x^\mi= y^\mi\to y^\mi=x^\mi$
    \item $x^\mi= y^\mi\land y^\mi= z^\mi\to x^\mi= z^\mi$
    \item $x_1^{\mi_1}= y_1^{\mi_1}\land \ldots \land x_n^{\mi_n} = y_n^{\mi_n}\rightarrow \left(R(x_1^{\mi_1},\ldots,x_n^{\mi_n})\to R(y_1^{\mi_1},\ldots,y_n^{\mi_n})\right)$, whenever $(\mi_1,\ldots ,\mi_n)\in \mathrm{rank}(R)$
    \item $x_1^{\mi_1}= y_1^{\mi_1}\land \ldots \land x_n^{\mi_n} = y_n^{\mi_n}\rightarrow f(x_1^{\mi_1},\ldots,x_n^{\mi_n})= f(y_1^{\mi_1},\ldots, y_n^{\mi_n})$, whenever $(\mi_1,\ldots ,\mi_n)\in \mathrm{dom}\left(\mathrm{rank}(f)\right)$
\end{itemize}

\begin{table}
\centering
\caption{Quantifier rules for $\mathrm{MSL}$}
\label{MSLrules}
\vspace{1em}
\footnotesize\begin{tabular}{ccc}

\begin{minipage}{0.26\textwidth}
\begin{prooftree}
\AxiomC{$\varphi$}
\RightLabel{${\forall} I$}
\UnaryInfC{$\forall x^\mi\, \varphi$}
\end{prooftree}
\end{minipage} &
\begin{minipage}{0.26\textwidth}
\begin{prooftree}
\AxiomC{$\forall x^\mi\, \varphi$}
\RightLabel{${\forall} E$}
\UnaryInfC{$\varphi[t^\mi/x^\mi]$}
\end{prooftree}
\end{minipage} &
\begin{minipage}{0.26\textwidth}
\end{minipage} \\[3em]

\begin{minipage}{0.26\textwidth}
\begin{prooftree}
\AxiomC{$\varphi[t^\mi/x^\mi]$}
\RightLabel{${\exists} I$}
\UnaryInfC{$\exists x^\mi\, \varphi$}
\end{prooftree}
\end{minipage} &
\begin{minipage}{0.26\textwidth}
\begin{prooftree}
\AxiomC{$\exists x^\mi\, \varphi$}
\AxiomC{$[\varphi]$}
\noLine
\UnaryInfC{$\vdots$}
\noLine
\UnaryInfC{$\psi$}
\RightLabel{${\exists} E$}
\BinaryInfC{$\psi$}
\end{prooftree}
\end{minipage} &
\begin{minipage}{0.26\textwidth}
\end{minipage} \\[2em]

\end{tabular}

\medskip

\begin{minipage}{0.8\textwidth}
\footnotesize
In $\forall I$ and $\exists E$, the variable $x^\mi$ may not occur free in any
undischarged assumption. In $\exists E$, $x^\mi$ may also not occur free in $\psi$.
\end{minipage}
\end{table}

\bigskip

\begin{proposition}\label{SimInst}\mbox{}
    \begin{enumerate}
        \item[(i)] $\varphi_1\leftrightarrow\varphi_2,\,\chi[\varphi_1/p]\vdash_\mathrm{MSL}   \chi[\varphi_2/p]$, where $p$ is a propositional variable and $\chi[\varphi/p]$ is the result of replacing each instance of $p$ with $\varphi$;
        \item[(ii)] $t_1^\mi=t_2^\mi,\,\varphi[t_1^\mi/x^\mi]\vdash_\mathrm{MSL}\varphi[t_2^\mi/x^\mi]$.
    \end{enumerate}
\end{proposition}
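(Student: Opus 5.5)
Both parts go by induction on the construction of the formula ($\chi$ in (i), $\varphi$ in (ii)), in each case proving a slightly stronger, symmetric statement so that the induction goes through for $\to$. Concretely, for (i) we prove $\varphi_1\leftrightarrow\varphi_2\vdash_\mathrm{MSL}\chi[\varphi_1/p]\leftrightarrow\chi[\varphi_2/p]$, and for (ii) we prove $t_1^\mi=t_2^\mi\vdash_\mathrm{MSL}\varphi[t_1^\mi/x^\mi]\leftrightarrow\varphi[t_2^\mi/x^\mi]$. The stated forms then follow by one application of $\wedge E$ and $\to E$. Throughout, bound variables of $\chi$ (resp.\ $\varphi$) are renamed so that they are distinct from the free variables of $\varphi_1,\varphi_2$ (resp.\ $t_1,t_2$). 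This keeps the substitutions well-defined and preserves the eigenvariable conditions.

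For (i), the atomic cases are immediate. If $\chi=p$, the claim is the assumption itself. If $\chi$ is $\bot$, an equation, or an atom not involving $p$, then both sides coincide and $A\leftrightarrow A$ is derivable outright. The propositional cases $\land,\lor,\to$ are routine natural-deduction manipulations of the two induction hypotheses; for $\to$ we need both directions of the hypothesis for the antecedent, which is why we carry the biconditional. For $\forall y^\mk\chi'$, we assume $\forall y^\mk\chi'[\varphi_1/p]$, apply $\forall E$ with the variable $y^\mk$ itself, use the induction hypothesis, and apply $\forall I$. This is legitimate because $y^\mk$ does not occur free in the open assumption $\varphi_1\leftrightarrow\varphi_2$, and it is discharged from the assumption $\forall y^\mk\chi'[\varphi_1/p]$ since $y$ is bound there. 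The $\exists$ case is dual, using $\exists E$ and $\exists I$. Note that every formula in these derivations is well sorted, because the substitution of well-sorted formulas for $p$ preserves well-sortedness.

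For (ii), the heart of the matter is the atomic case, and this is where the equality axioms enter. First, by induction on terms $s$, we show $t_1^\mi=t_2^\mi\vdash s[t_1/x]=s[t_2/x]$. If $s$ is the variable $x$, this is the hypothesis. If $s$ is another variable, we use reflexivity. If $s$ is $f(s_1,\dots,s_n)$, we instantiate the function congruence axiom via $\forall E$ with the terms $s_k[t_j/x]$. These have sorts $\mi_1,\dots,\mi_n$ with $(\mi_1,\ldots,\mi_n)\in\mathrm{dom}(\mathrm{rank}(f))$ precisely because $s$ is well sorted and substituting $t_j^\mi$ for $x^\mi$ preserves sorts. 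Then the atomic cases follow. For $R(s_1,\dots,s_n)$, we instantiate the relation congruence axiom at the correct sort tuple in both directions, obtaining the reverse direction by first applying symmetry to each $s_k$-equation. For $s=s'$, we use symmetry and transitivity. The connective and quantifier cases are exactly as in (i), with the variable condition now requiring that the bound variable differ from $x$ and from the variables of $t_1,t_2$.

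The only step needing real care is sort bookkeeping: every instance of an axiom used must be a well-sorted instance at a sort tuple lying in $\mathrm{rank}$. Since $\mathrm{rank}(R)$ may contain several tuples, we must pick the tuple actually realized by the atom, and verify that substitution of same-sort terms leaves that tuple unchanged. I expect this to be a short lemma (substitution preserves $\mathrm{sort}$), proved by induction on terms, after which everything is routine.
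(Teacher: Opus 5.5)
Your proof is correct and is the standard induction the paper has in mind; the paper omits the proof as ``entirely standard.'' One caveat: renaming the bound variables of $\chi$ gives an alpha-equivalent formula only when $\varphi_1,\varphi_2$ are free for $p$ in $\chi$. That side condition is genuinely needed for (i), since with capture the claim fails (take $\chi=\forall y^\mi p$, $\varphi_1=\top$, $\varphi_2=P(y^\mi)$). So it is cleaner to state the condition explicitly and replace the renaming with a case split on whether $p$ (resp.\ a free $x$) occurs in the scope of the quantifier.
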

\begin{proof}
    The proofs are entirely standard and therefore omitted.
    
\end{proof}

\begin{definition}\label{def:strict-liberal}
An $n$-ary relation symbol $R$ is called \emph{strict} if $\mathrm{rank}(R)$ is a singleton, and \emph{liberal} if $\mathrm{rank}(R)=\mathrm{Sort}^n$. Similarly, an $n$-ary function symbol $f$ is called \emph{strict} if $\mathrm{rank}(f)$ is a singleton, and \emph{liberal} if $\mathrm{dom}(\mathrm{rank}(f))=\mathrm{Sort}^n$. For strict relation and function symbols, we will abuse notation and write $\mathrm{rank}(R)=(\mi_1,\dots \mi_n)$ and $\mathrm{rank}(f)=(\mi_1,\dots \mi_n,\mi)$, respectively.
\end{definition}

\begin{proposition}\label{StrictIsSufficient}
    Fix a signature $\mathcal{L}$, and let $\mathcal{L}^s$ be the signature obtained by replacing each $R\in\mathrm{Rel}$ and $f\in\mathrm{Fun}$ with strict symbols $R_{(\mi_1,\dots,\mi_n)}$ and $f_{(\mi_1,\dots,\mi_n)}$, one for each sort in $\mathrm{rank}(R)$ and $\mathrm{rank}(f)$, respectively. Let $\varphi^s$ be obtained from $\varphi$ by replacing each occurrence of a relation or function symbol with the associated strict symbol from $\mathcal{L}^s$. Then
    $$\Gamma\vdash_\mathrm{MSL}^{\mathcal{L}}\varphi \quad\text{iff}\quad \Gamma^s\vdash_\mathrm{MSL}^{\mathcal{L}^s}\varphi^s.$$
\end{proposition}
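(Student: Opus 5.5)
The plan is to show that the map $\varphi\mapsto\varphi^s$ is a bijection between well-sorted $\mathcal{L}$-formulas and well-sorted $\mathcal{L}^s$-formulas that commutes with everything the calculus uses. The inverse erases the subscripts, sending $R_{(\mi_1,\dots,\mi_n)}$ to $R$ and $f_{(\mi_1,\dots,\mi_n)}$ to $f$. With this bijection in hand, derivations can be transported in both directions node by node.

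\textbf{Step 1: sorts determine the strict symbol.} By induction on Definition~\ref{def:terms}, each well-sorted $\mathcal{L}$-term $t$ has a uniquely determined sort. For a variable, the sets $\mathrm{Var}_\mi$ are pairwise disjoint. For $f(t_1,\dots,t_n)$, the sorts $\mi_k$ of the $t_k$ are determined by the induction hypothesis, and since $\mathrm{rank}(f)$ is a partial function, the output sort $\mathrm{rank}(f)(\mi_1,\dots,\mi_n)$ is unique. Consequently $t^s$ is well defined: replace $f$ by $f_{(\mi_1,\dots,\mi_n)}$, where the $\mi_k$ are the sorts of the arguments. Moreover $t^s$ is a well-sorted $\mathcal{L}^s$-term of the same sort as $t$. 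The same argument applies to atomic formulas $R(t_1,\dots,t_n)$, since $(\mi_1,\dots,\mi_n)\in\mathrm{rank}(R)$. Conversely, the erasure map sends well-sorted $\mathcal{L}^s$-terms and formulas to well-sorted $\mathcal{L}$-ones. The two maps are mutually inverse, because in a well-sorted $\mathcal{L}^s$-expression the subscript of each symbol is forced by the sorts of its arguments.

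\textbf{Step 2: commutation.} I would verify $(\varphi[t/x])^s=\varphi^s[t^s/x]$ for $t$ of the same sort as $x$. The key point is that substitution preserves the sorts of all subterms, so every subscript chosen in Step 1 is unchanged. The translation also commutes with connectives and quantifiers, does not change free variables, and preserves freeness for substitution.

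\textbf{Step 3: transporting derivations.} Given an MSL derivation of $\varphi$ from $\Gamma$ over $\mathcal{L}$, apply $(\cdot)^s$ to every formula in it. By Step 2, propositional and quantifier rule instances, including their eigenvariable conditions, map to rule instances over $\mathcal{L}^s$.

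The axioms are the step that needs checking, and it is the only place real care is needed. The translated equality axioms $x^\mi=x^\mi$, symmetry and transitivity are unchanged. A congruence axiom for $R$ at the tuple $(\mi_1,\dots,\mi_n)$ becomes exactly the congruence axiom for $R_{(\mi_1,\dots,\mi_n)}$, because both sides have the same argument sorts; the same holds for $f$. Conversely, every $\mathcal{L}^s$ axiom is of this form. One also needs to check that universal closures correspond, which is immediate.

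The converse direction is the identical argument with the erasure map, using the bijection from Step 1. I expect no genuine obstacle beyond being careful in Step 1 that $\mathrm{rank}(f)$ being a partial function is what makes terms have unique sorts, which is what makes the translation well defined and injective.
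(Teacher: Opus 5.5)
Your proposal is correct and follows the same route as the paper, which simply states that both directions are routine inductions on the length of the derivation. You spell out what the paper leaves implicit: uniqueness of sorts makes $(\cdot)^s$ well defined and invertible by erasure, the translation commutes with same-sort substitution, and congruence axioms correspond exactly in both directions.
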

\begin{proof}
    Both directions are routine inductions on the length of the derivation.
    
\end{proof}

\subsection{Signature extensions}\label{SigExt}

\begin{definition}\label{def:extension}
    Let $\mathcal{L}$ and $\mathcal{L}'$ be many-sorted signatures. We say that $\mathcal{L}'$ \emph{extends} $\mathcal{L}$, and write $\mathcal{L}\leq \mathcal{L}'$, if:
    \begin{enumerate}
        \item $\mathrm{Sort}\subseteq \mathrm{Sort}'$;
        \item $\mathrm{Var}_\mi=\mathrm{Var}'_\mi$ for each $\mi\in \mathrm{Sort}$;
        \item $\mathrm{Rel}\subseteq \mathrm{Rel}'$ and $\mathrm{Fun}\subseteq \mathrm{Fun}'$;
        \item $\mathrm{rank}(R)\subseteq \mathrm{rank}'(R)$ for all $R\in \mathrm{Rel}$;
        \item $\mathrm{rank}(f)\subseteq \mathrm{rank}'(f)$ for all $f\in \mathrm{Fun}$.
    \end{enumerate}
 In particular, every well-sorted $\mathcal{L}$-term is a well-sorted $\mathcal{L}'$-term with the same sort, and every well-sorted $\mathcal{L}$-formula is a well-sorted $\mathcal{L}'$-formula.
\end{definition}

\begin{proposition}\label{ConLan}
If $\mathcal{L}\leq \mathcal{L}'$ and $\Gamma\cup\{\varphi\}$ is a set of $\mathcal{L}$-formulas, then
        $$\Gamma\vdash_{\mathrm{MSL}}^{\mathcal{L}'}\varphi \quad\text{implies}\quad \Gamma\vdash_{\mathrm{MSL}}^{\mathcal{L}}\varphi.$$
\end{proposition}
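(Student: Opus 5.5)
We take a derivation $\mathcal{D}$ of $\varphi$ from $\Gamma$ in the extended signature $\mathcal{L}'$ and transform it, step by step, into a derivation in $\mathcal{L}$ with the same conclusion and open assumptions. Formulas in $\mathcal{D}$ may fail to be $\mathcal{L}$-formulas for three reasons: they mention symbols of $\mathrm{Rel}'\setminus\mathrm{Rel}$ or $\mathrm{Fun}'\setminus\mathrm{Fun}$; they use old symbols at sort combinations in $\mathrm{rank}'\setminus\mathrm{rank}$; or they contain variables or terms of sorts in $\mathrm{Sort}'\setminus\mathrm{Sort}$. The remedy is a translation $(\cdot)^\circ$ from $\mathcal{L}'$-formulas to $\mathcal{L}$-formulas that is the identity on $\mathcal{L}$-formulas and preserves the rules and axioms.

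\textbf{Step 1: terms.} Fix for each sort $\mi\in\mathrm{Sort}$ a variable $c_\mi\in\mathrm{Var}_\mi$ that does not occur in $\mathcal{D}$; this is possible since $\mathcal{D}$ is finite and each $\mathrm{Var}_\mi$ is infinite. Define $t^\circ$ on $\mathcal{L}'$-terms. If $t$ is an $\mathcal{L}$-term, set $t^\circ:=t$. Otherwise, if $\mathrm{sort}(t)=\mi\in\mathrm{Sort}$, set $t^\circ:=c_\mi$. Terms whose sort lies in $\mathrm{Sort}'\setminus\mathrm{Sort}$ are never translated.

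\textbf{Step 2: formulas.} Define $A^\circ$ as follows.
\begin{itemize}
\item An atom $R(t_1,\dots,t_n)$ or $t_1=t_2$ goes to itself if it is an $\mathcal{L}$-formula, and to $\bot$ otherwise.
\item $(\cdot)^\circ$ commutes with the connectives.
\item $(\exists x^\mi A)^\circ=\exists x^\mi A^\circ$ and $(\forall x^\mi A)^\circ=\forall x^\mi A^\circ$ when $\mi\in\mathrm{Sort}$.
\item For $\mi\notin\mathrm{Sort}$, set $(\forall x^\mi A)^\circ:=A^\circ$ and $(\exists x^\mi A)^\circ:=A^\circ$. This is well defined because $A^\circ$ cannot contain $x^\mi$: every atom mentioning a new-sort variable has already become $\bot$.
\end{itemize}
The key substitution lemma is $(A[t/x])^\circ = A^\circ[t^\circ/x]$ when $\mathrm{sort}(t)\in\mathrm{Sort}$, and $(A[t/x])^\circ=A^\circ$ otherwise. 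It may hold only up to provable equivalence, for example because $c_\mi$ and $t$ differ while the relevant atom became $\bot$. I would check it by induction on $A$ and adjust the atom clause if necessary, perhaps by translating an atom containing a non-$\mathcal{L}$ term to $\bot$ uniformly.

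\textbf{Step 3: induction on $\mathcal{D}$.} We show that each rule instance maps to a derivable instance in $\mathcal{L}$.
\begin{itemize}
\item Propositional rules are immediate.
\item The equality axioms translate either to themselves or to formulas with $\bot$ antecedents. One exception is $x=x$ when $x$ has a new sort, but then the whole formula lies under a new-sort quantifier, and $x^\mi=x^\mi$ becomes $\bot$. This is a genuine problem: reflexivity for new sorts would translate to $\bot$.
\end{itemize}
So atoms must be translated more carefully. Equalities $t_1=t_2$ between non-$\mathcal{L}$ terms of the same sort should go to $\top$, while other non-$\mathcal{L}$ atoms go to $\bot$. This is the Herbrand-style trick restricted to the new material. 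Symmetry and transitivity then become trivially provable, and congruence becomes $\top\to(\bot\to\bot)$ or an instance of the old axiom.
\begin{itemize}
\item Quantifier rules at old sorts use the substitution lemma. For $\forall E$ with a term $t$ whose sort is old but which is not an $\mathcal{L}$-term, we instantiate at $c_\mi$, which is legitimate.
\item Quantifier rules at new sorts become trivial identities.
\item Eigenvariable conditions are preserved, because $c_\mi$ is fresh and free variables only disappear under $(\cdot)^\circ$.
\end{itemize}

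\textbf{Finishing and main obstacle.} The free $c_\mi$ that remain in the result do not occur in $\Gamma$ or $\varphi$, so they do no harm. Since $\Gamma^\circ=\Gamma$ and $\varphi^\circ=\varphi$, the translated derivation shows $\Gamma\vdash^{\mathcal{L}}_{\mathrm{MSL}}\varphi$. I expect the main obstacle to be the consistency of the atom translation with substitution and with the equality axioms. The concern is that an equation or a congruence instance mixing old $\mathcal{L}$-terms with translated ill-formed terms collapses to $\top$ or $\bot$ in a way that breaks $\forall E$ or congruence. I would first try to resolve this by making the translation depend only on whether an atom is an $\mathcal{L}$-formula, using the fact that substituting an $\mathcal{L}$-term for an old-sort variable preserves being an $\mathcal{L}$-formula.
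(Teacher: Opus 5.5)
Your proposal has a genuine gap, and it is the obstacle you flag at the end. The problem is the pair of clauses $t^\circ:=c_\mi$ for every non-$\mathcal{L}$ term of old sort, and atoms sent to $\bot$ (or $\top$) whenever they are not $\mathcal{L}$-formulas. With these clauses the substitution lemma fails, even up to provable equivalence, and so $\forall E$ and $\exists I$ are not preserved.

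Let $\mathcal{L}'$ add only a constant $c$ of an old sort $\mi$. Consider the $\mathcal{L}'$-derivation that passes from $\forall x^\mi R(x)$ by $\forall E$ to $R(c)$, and then by $\exists I$ to $\exists x^\mi R(x)$. Since $R(c)$ is not an $\mathcal{L}$-formula, it becomes $\bot$ or $\top$. With $\bot$ the first step becomes $\forall x^\mi R(x)\vdash\bot$; with $\top$ the second becomes $\top\vdash\exists x^\mi R(x)$. Neither is derivable in $\mathcal{L}$. Instantiating at $c_\mi$, as you suggest, yields $R(c_\mi)$, which is not the translation of $R(c)$ under your atom clause and not equivalent to $\bot$ or $\top$.

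So the repair you propose, making the atom clause depend only on whether the atom is an $\mathcal{L}$-formula, is exactly what breaks. Substituting a non-$\mathcal{L}$ term of old sort for an old-sort variable destroys $\mathcal{L}$-formulahood. The term clause has the same defect even if atoms are translated argumentwise. Take an old $f$ of rank $(\mi,\mj)$ and an old $P$ of rank $(\mj)$. The step $\forall x^\mi P(f(x))\vdash P(f(c))$ would have to become $\forall x^\mi P(f(x))\vdash P(c_\mj)$, which is underivable because $f$ need not be surjective.

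What is missing is a \emph{structural} translation. Define $t^\circ$ by recursion on $t$:
\begin{itemize}
\item variables are left unchanged;
\item an occurrence $f(t_1,\dots,t_n)$ that is legitimate in $\mathcal{L}$ (old symbol at an old sort tuple) goes to $f(t_1^\circ,\dots,t_n^\circ)$;
\item only a subterm headed by an illegitimate occurrence is replaced by the fresh variable of its sort.
\end{itemize}
Translate atoms argumentwise too: $R(t_1^\circ,\dots,t_n^\circ)$ and $t_1^\circ=t_2^\circ$ when symbol and sorts are old, and $\top$ otherwise. Legitimacy of an occurrence depends only on sorts, and sorts are invariant under same-sort substitution. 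Hence $(A[t^\mi/x^\mi])^\circ=A^\circ[t^\circ/x]$ holds exactly, and every rule instance maps to a rule instance. In the examples above, $R(c)$ becomes $R(c_\mi)$ and $P(f(c))$ becomes $P(f(c_\mi))$.

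This is the paper's proof. It disposes of old symbols at new sort tuples by first passing to strict signatures, using Proposition~\ref{StrictIsSufficient} and $\mathcal{L}^s\leq(\mathcal{L}')^s$.

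The rest of your plan is fine. Sending new-sort equalities to $\top$ is what the paper does. Dropping new-sort quantifiers, instead of the paper's vacuous $\forall w^{\mi_0}$, is harmless, because after the repair no new-sort variable survives in $A^\circ$.
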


\begin{proof} Note that $\mathcal{L}\leq \mathcal{L}'$ iff $\mathcal{L}^s\leq (\mathcal{L}')^s$. So, without loss of generality, we can assume that $\mathcal{L}$ and $\mathcal{L}'$ only have strict relation and function symbols. Fix an $\mathcal{L}'$-derivation $\mathcal{D}$ of $\varphi$ from $\Gamma$, and for each sort $\mi\in\mathrm{Sort}'$, fix a fresh variable $z_\mi$. Given an $\mathcal{L}'$-term $t$, we define $t^\circ$ by letting $x^\circ:=x$ and
$$
\bigl(f(t_1,\dots,t_n)\bigr)^\circ \;:=\;
  \begin{cases}
    f(t_1^\circ,\dots,t_n^\circ) & \text{if $f\in \mathrm{Fun}$,} \\
    z_{\mathrm{sort}(f(t_1,\dots,t_n))} & \text{else.}
  \end{cases}
$$
If $t$ is an $\mathcal{L}$-term, then $t^\circ=t$. A simple induction also gives $\mathrm{sort}(t^\circ)=\mathrm{sort}(t)$ for every $\mathcal{L}'$-term $t$. Moreover, since each function symbol is strict, we get that $t^\circ$ is an $\mathcal{L}$-term when $\mathrm{sort}(t)\in\mathrm{Sort}$.

\smallskip

We fix a fresh variable $w^{\mi_0}$ with $\mi_0\in\mathrm{Sort}$ and further define $(\cdot)^\circ$ for $\mathcal{L}'$-formulas as follows:
\begin{align*}
  \bot^\circ &:= \bot, \\
  (t_1 = t_2)^\circ &:=
    \begin{cases}
      t_1^\circ = t_2^\circ & \text{if } \mathrm{sort}(t_1^\circ)=\mathrm{sort}(t_2^\circ)\in\mathrm{Sort}, \\
      \top & \text{else;}
    \end{cases} \\
  \bigl(R(t_1,\dots,t_n)\bigr)^\circ &:=
    \begin{cases}
      R(t_1^\circ,\dots,t_n^\circ) & \text{if $R\in\mathrm{Rel}$,} \\
      \top & \text{else;}
    \end{cases} \\
  (\psi\star\chi)^\circ &:= \psi^\circ\star\chi^\circ
    \quad\text{for } \star\in\{\land,\lor,\to\}, \\
  (\exists x^\mi\,\psi)^\circ &:=
    \begin{cases}
      \exists x^\mi\,\psi^\circ & \text{if } \mi\in\mathrm{Sort}, \\
      \exists w^{\mi_0}\psi^\circ              & \text{else;}
    \end{cases} \\
  (\forall x^\mi\,\psi)^\circ &:=
    \begin{cases}
      \forall x^\mi\,\psi^\circ & \text{if } \mi\in\mathrm{Sort}, \\
      \forall w^{\mi_0}\psi^\circ               & \text{else.}
    \end{cases}
\end{align*}

\noindent Now, $\psi^\circ$ is a well-sorted $\mathcal{L}$-formula for every $\mathcal{L}'$-formula $\psi$, and if $\psi$ is an $\mathcal{L}$-formula, then $\psi^\circ = \psi$. Moreover, 
$\bigl(\psi[t^\mi/x^\mi]\bigr)^\circ=\psi^\circ[t^\circ/x]$ whenever $x^\mi$ is distinct from $z_\mi$.

If $\chi$ is an equality axiom of $\mathcal{L}'$, then $\chi^\circ$ is derivable in $\mathcal{L}$. We can now construct the desired derivation by applying $(\cdot)^\circ$ to each step in $\mathcal{D}$.

\end{proof}

\subsection{Liberal equality}\label{libeqsec}

Our notion of equality in $\mathrm{MSL}$ is strict in the sense that $t_1 = t_2$ is well sorted only when $\mathrm{sort}(t_1) = \mathrm{sort}(t_2)$. In some contexts, it is useful to compare terms of different sorts. We introduce \emph{liberal equality}, denoted $\approx$, as an additional binary relation symbol with $\mathrm{rank}(\approx) := \mathrm{Sort}\times \mathrm{Sort}$. We let $\mathcal{L}^\approx$ be the extension of $\mathcal{L}$ with $\approx$ and $\mathrm{Eq}^\approx_\mathcal{L}$ be the set of (the universal closures of) the following:
\begin{itemize}
    \item $x^\mi\approx y^\mi\leftrightarrow x^\mi=y^\mi$
    \item $x^\mi\approx y^\mj\to y^\mj\approx x^\mi$
    \item $x^\mi\approx y^\mj\land y^\mj\approx z^\mk\to x^\mi\approx z^\mk$
    \item $x_1^{\mi_1}\approx y_1^{\mj_1}\land \ldots \land x_n^{\mi_n}\approx y_n^{\mj_n}\rightarrow \left(R(x_1^{\mi_1},\ldots,x_n^{\mi_n})\to R(y_1^{\mj_1},\ldots,y_n^{\mj_n})\right)$, whenever well sorted
    \item $x_1^{\mi_1}\approx y_1^{\mk_1}\land \ldots \land x_n^{\mi_n}\approx y_n^{\mk_n}\rightarrow f(x_1^{\mi_1},\ldots,x_n^{\mi_n})\approx f(y_1^{\mk_1},\ldots, y_n^{\mk_n})$, whenever well sorted
\end{itemize} We write $\Gamma\vdash^{\mathcal{L}}_{\mathrm{MSL}^\approx}\varphi$ as shorthand for $\Gamma,\mathrm{Eq}^\approx_\mathcal{L}\vdash_{\mathrm{MSL}}^{\mathcal{L}^\approx}\varphi.$ We easily get the following.

\begin{proposition}
    $t_1^{\mi_1}\approx t_2^{\mi_2},\,\varphi[t_1^{\mi_1}/x^\mi]\vdash^\mathcal{L}_{\mathrm{MSL}^\approx}\varphi[t_2^{\mi_2}/x^\mi]$, provided $\varphi[t_1^{\mi_1}/x^\mi]$ and $\varphi[t_2^{\mi_2}/x^\mi]$ are well-sorted.
\end{proposition}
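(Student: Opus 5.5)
I would prove this by induction on the construction of $\varphi$, keeping $x^\mi$ fixed and simultaneously treating all terms $t_1^{\mi_1}, t_2^{\mi_2}$ for which both substitution instances are well sorted. Since the claim is about formulas and substitution can also happen inside terms, I would first establish an auxiliary lemma for terms. If $s$ is an $\mathcal{L}$-term such that $s[t_1/x]$ and $s[t_2/x]$ are both well sorted, then
$$t_1\approx t_2\vdash^{\mathcal{L}}_{\mathrm{MSL}^\approx} s[t_1/x]\approx s[t_2/x].$$
This goes by induction on $s$. If $s$ is the variable $x$, the conclusion is just the hypothesis. If $s$ is a variable $y\neq x$, the two sides coincide, and $y\approx y$ follows from $y=y$ via the first axiom of $\mathrm{Eq}^\approx_\mathcal{L}$. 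If $s=f(s_1,\dots,s_n)$, the induction hypothesis gives $s_k[t_1/x]\approx s_k[t_2/x]$ for each $k$. Instantiating the universally closed function axiom for $\approx$ by $\forall E$ at these well-sorted terms, and then applying $\to E$, yields the conclusion. That instance of the axiom is available precisely because both $f(\dots)$ terms are well sorted.

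For formulas, the atomic cases are handled as follows. The case $\bot$ is trivial. For $R(s_1,\dots,s_n)$, I combine the term lemma with the relation axiom for $\approx$, instantiated at the two tuples of substituted terms; it is well sorted because both instances are. For strict equality $s=s'$, I use the first axiom to pass to $s\approx s'$ at the $t_1$-instance. Then I use symmetry and transitivity of $\approx$ together with the term lemma:
$$s[t_2/x]\approx s[t_1/x]\approx s'[t_1/x]\approx s'[t_2/x].$$
Finally I convert back with the first axiom, which applies since $s[t_2/x]$ and $s'[t_2/x]$ have the same sort by well-sortedness. The case where $\approx$ itself is the relation symbol is covered by the relation axiom, or directly by symmetry and transitivity.

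For the compound cases I would strengthen the induction to prove both directions at once, since $\to$ is contravariant. The strengthened claim is that from $t_1\approx t_2$ we get
$$\varphi[t_1/x]\leftrightarrow\varphi[t_2/x].$$
Symmetry of $\approx$ gives $t_2\approx t_1$, so the reverse direction comes for free. Connectives are then routine. For quantifiers $\forall y^\mk\psi$ and $\exists y^\mk\psi$: if $y=x$, the substitution is vacuous. Otherwise, because $t_1,t_2$ are free for $x$, the variable $y$ does not occur in $t_1$ or $t_2$. So the hypothesis $t_1\approx t_2$ has no free $y$, and I can apply the induction hypothesis under $\forall E$, $\forall I$, or $\exists E$, $\exists I$ without violating the eigenvariable conditions.

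The main obstacle is bookkeeping rather than ideas. The induction hypothesis must be applied to subformulas whose substitution instances are still well sorted, and this holds because well-sortedness of a formula implies well-sortedness of its subformulas and subterms. One must also check that each instance of an $\mathrm{Eq}^\approx_\mathcal{L}$ axiom used is one that is actually included, that is, one that is well sorted. Note that $\mi_1$ and $\mi_2$ may differ, so terms in corresponding positions of the $t_1$- and $t_2$-instances can have different sorts. This is exactly why the proof routes everything through $\approx$ rather than through $=$, as in Proposition~\ref{SimInst}(ii).
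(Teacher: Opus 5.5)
Your argument is correct. It is the routine induction the paper has in mind when it writes ``we easily get the following'' without giving a proof: a term lemma via the function-congruence axiom of $\mathrm{Eq}^\approx_\mathcal{L}$, followed by induction on $\varphi$ with the biconditional strengthening. Two cosmetic fixes: in the quantifier case, split on ``$x$ not free in $\forall y\,\psi$'' (substitution vacuous) versus ``$x$ free'', since only the latter forces $y$ to be absent from $t_1$ and $t_2$; and state the term lemma and the induction for arbitrary, possibly ill-sorted $s$ and $\varphi$, which your proof already supports because it only uses well-sortedness of the substitution instances.
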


\begin{remark} We chose to allow non-strict function symbols in our syntax. This means that we have to be careful when using $\approx$, as an $\mathcal{L}^\approx$-formula can become derivable simply by adding a new function symbol to the language. As an example, take $\varphi$ to be the following formula:
$$\left(\forall x^0\forall y^1(x^0\approx y^1)\land\;\forall z^2\forall w^2(z^2\approx w^2)\right)\to a^0\approx b^2.$$
Clearly, $\not\vdash^\mathcal{L}_{\mathrm{MSL}^\approx}\varphi$ where $\mathcal{L}^\approx$ is the signature of $\varphi$. However, if we extend $\mathcal{L}$ with a function symbol $f$ where $\mathrm{rank}(f)=\{(0,1),(1,2)\}$, then $\vdash^{\mathcal{L}'}_{\mathrm{MSL}^\approx}\varphi$: The first conjunct gives $a\approx f(a)$, and applying $f$ to both sides gives $f(a)\approx f(f(a))$. The second conjunct gives $f(f(a))\approx b$, so $a\approx b$ by transitivity.

\smallskip

This problem does not occur if signatures are required to contain only strict function symbols, as in, e.g., \cite{ManzanoAranda2022}.
\end{remark}

\begin{restatable}{proposition}{libeqext}
\label{LibeqExt}
Suppose that $\mathcal{L}$ and $\mathcal{L}'$ only contain strict function symbols and $\mathcal{L}\leq \mathcal{L}'$. If $\Gamma\cup\{\varphi\}$ is a set of $\mathcal{L}^\approx$-formulas, then
$$\Gamma\vdash_{\mathrm{MSL}^{\approx}}^{\mathcal{L}'}\varphi \quad\text{implies}\quad \Gamma\vdash_{\mathrm{MSL}^{\approx}}^{\mathcal{L}}\varphi.$$
\end{restatable}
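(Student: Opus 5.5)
The plan is to adapt the $(\cdot)^\circ$ translation from the proof of Proposition~\ref{ConLan}, with one change: a new sort will not be sent to $\top$ but \emph{collapsed} onto an old sort. The reason is that the $\top$-trick fails for $\mathrm{Eq}^\approx_{\mathcal{L}'}$. Take the transitivity instance $x^\mi\approx y^\mj\land y^\mj\approx z^\mk\to x^\mi\approx z^\mk$ with $\mj$ new and $\mi,\mk$ old. If atoms involving $\mj$ became $\top$, its translation would be $\top\land\top\to x^\mi\approx z^\mk$, which is not derivable.

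\textbf{Step 1: the translation.} Fix a derivation $\mathcal{D}$ of $\varphi$ from $\Gamma\cup\mathrm{Eq}^\approx_{\mathcal{L}'}$ in $\mathcal{L}'^\approx$, and fix $\mi_0\in\mathrm{Sort}$. Since $\mathcal{D}$ is finite, I choose an injective map $\rho$ that sends the variables of new sorts occurring in $\mathcal{D}$ to fresh variables of $\mathrm{Var}_{\mi_0}$. Old variables are left unchanged. For each old sort $\mi$ I fix a fresh variable $z_\mi$. The maps $\sigma(\mi)=\mi$ for $\mi\in\mathrm{Sort}$ and $\sigma(\mj)=\mi_0$ otherwise give the collapse on sorts.

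Terms are translated by $x^\circ:=\rho(x)$ and $f(\vec t)^\circ:=f(\vec t^\circ)$ if $f\in\mathrm{Fun}$, and $f(\vec t)^\circ:=z_{\sigma(\mathrm{sort}(f(\vec t)))}$ otherwise. Because function symbols are strict on both sides, $\mathrm{rank}'(f)=\mathrm{rank}(f)$ for $f\in\mathrm{Fun}$. A subterm of an old function symbol therefore always has an old sort and is translated faithfully. By induction, $t^\circ$ is an $\mathcal{L}$-term of sort $\sigma(\mathrm{sort}(t))$.

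Formulas are translated as follows:
\begin{itemize}
\item $(t_1\approx t_2)^\circ:=t_1^\circ\approx t_2^\circ$ and $(t_1=t_2)^\circ:=t_1^\circ=t_2^\circ$. Both are well sorted after the collapse.
\item New relation symbols go to $\top$.
\item Quantifiers over a new sort become quantifiers over the $\rho$-image variable of sort $\mi_0$.
\end{itemize}
Injectivity of $\rho$ and freshness of the $z_\mi$ make eigenvariable conditions survive. They also give $(\psi[t/x])^\circ=\psi^\circ[t^\circ/x^\circ]$, so every rule instance of $\mathcal{D}$ maps to a rule instance in $\mathcal{L}^\approx$. Formulas of $\Gamma\cup\{\varphi\}$ are unchanged.

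\textbf{Step 2: old relation symbols.} This is the step I expect to be the main obstacle, because relation symbols need not be strict. We may have $\mathrm{rank}(R)\subsetneq\mathrm{rank}'(R)$, so $R(t_1^\circ,\dots,t_n^\circ)$ may fail to be well sorted in $\mathcal{L}$. Replacing such atoms by $\top$ breaks the congruence axiom whose antecedent is a good tuple and whose consequent is a bad one. My plan is to fix a tuple $(\mk_1,\dots,\mk_n)\in\mathrm{rank}(R)$ and define the translation as follows:
\begin{itemize}
\item if the sort tuple of the $t_\ell^\circ$ lies in $\mathrm{rank}(R)$, then $R(\vec t)^\circ:=R(\vec t^\circ)$;
\item otherwise $R(\vec t)^\circ:=\exists u_1^{\mk_1}\cdots\exists u_n^{\mk_n}\bigl(\bigwedge_\ell t_\ell^\circ\approx u_\ell\land R(\vec u)\bigr)$, with the $u_\ell$ fresh.
\end{itemize}
Commutation with substitution still holds up to renaming of bound variables.

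\textbf{Step 3: translated axioms are derivable.} Finally I check that the translation of each axiom of $\mathrm{Eq}^\approx_{\mathcal{L}'}$ and each strict equality axiom of $\mathcal{L}'$ is derivable in $\mathcal{L}^\approx$ from $\mathrm{Eq}^\approx_{\mathcal{L}}$.
\begin{itemize}
\item Reflexivity, symmetry, transitivity and $\approx\leftrightarrow=$ translate into instances of the corresponding axioms at collapsed sorts, since $\mathrm{rank}(\approx)$ is all of $\mathrm{Sort}^2$.
\item Congruence for a new $f$ becomes $\cdots\to z\approx z$, which follows from reflexivity via $z\approx z\leftrightarrow z=z$.
\item Congruence for a new $R$ becomes $\cdots\to(\top\to\top)$.
\item Congruence for an old $R$ splits into four good/bad cases. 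In each case the derivation uses the $L$-congruence axiom for $R$ together with symmetry and transitivity of $\approx$: in the mixed cases one introduces or eliminates the existential witnesses $\vec u$ and chains $x\approx y\approx u$.
\end{itemize}
Applying $(\cdot)^\circ$ stepwise to $\mathcal{D}$ and splicing in these derivations yields $\Gamma\vdash^{\mathcal{L}}_{\mathrm{MSL}^\approx}\varphi$.
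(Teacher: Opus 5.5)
\textbf{The gap is in Steps 2--3.} A single fixed tuple $(\mk_1,\dots,\mk_n)$ breaks the mixed ``good antecedent, bad consequent'' case of the $\approx$-congruence axiom for an old $R$ whenever $\mathrm{rank}(R)$ has more than one element.

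For a concrete case, let $R$ be unary with $\mathrm{rank}(R)=\{(\mi),(\mk)\}$, and fix the tuple $(\mk)$. Let $\mathrm{rank}'(R)$ also contain $(\ml)$ for an old sort $\ml\notin\{\mi,\mk\}$, which $\mathcal{L}\leq\mathcal{L}'$ allows. Your translation sends the axiom $x^\mi\approx y^\ml\to(R(x^\mi)\to R(y^\ml))$ of $\mathrm{Eq}^\approx_{\mathcal{L}'}$ to
\[
x^\mi\approx y^\ml\to\bigl(R(x^\mi)\to\exists u^\mk(y^\ml\approx u^\mk\land R(u^\mk))\bigr).
\]
This formula is not derivable from $\mathrm{Eq}^\approx_{\mathcal{L}}$. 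Take one element per sort, $a$, $b$, $c$ of sorts $\mi$, $\mk$, $\ml$. Let $\approx$ have the classes $\{a,c\}$ and $\{b\}$, and let $R$ hold only of $a$. Every equality axiom of $\mathcal{L}^\approx$ and every axiom of $\mathrm{Eq}^\approx_{\mathcal{L}}$ holds, since no well-sorted $R$-congruence instance has $a\approx b$ true. But the formula fails at $x:=a$, $y:=c$.

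Syntactically, the only available witness is $x^\mi$ itself, and it has the wrong sort. No choice of a single tuple avoids this.

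\textbf{Repair.} Translate a bad atom by the finite disjunction
\[
\bigvee_{(\mk_1,\dots,\mk_n)\in\mathrm{rank}(R)\cap S^n}\exists u_1^{\mk_1}\cdots\exists u_n^{\mk_n}\bigl(t_1^\circ\approx u_1\land\cdots\land t_n^\circ\approx u_n\land R(u_1,\dots,u_n)\bigr),
\]
where $S$ is the finite set of collapsed sorts occurring in $\mathcal{D}$. The good-to-bad case is then witnessed by the disjunct for the sort tuple of $\vec x^\circ$, with $\vec u:=\vec x^\circ$. The other three cases go through as you describe. This disjunction is exactly the formula $\zeta_R$ of Lemma~\ref{LibeqLiberalRel}, which the paper applies as a separate first step to make all relation symbols liberal.

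With that repair, your collapse of new sorts onto $\mi_0$ is a valid alternative to the paper's second step. The paper keeps new sorts apart and trivializes them:
\begin{itemize}
\item $=$ and $\approx$ between terms of the same new sort become $\top$;
\item $\approx$ between different sorts, at least one of them new, becomes $\bot$;
\item atoms with a new sort become $\top$;
\item quantifiers over new sorts become vacuous quantifiers over a dummy $w^{\mi_0}$.
\end{itemize}
Under this scheme your transitivity example has antecedent $\bot\land\bot$. So your opening objection only applies to an all-$\top$ translation.

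Your route needs the injective renaming $\rho$ but preserves more structure. The paper's route avoids renaming, because variables of new sorts disappear from translated formulas.
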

\begin{proof}
We place the proof in Appendix~\ref{app:libeq}, since this proposition will not be used elsewhere in the paper.

\end{proof}

\begin{proposition}\label{LibeqConserv}
If $\Gamma\cup \{\varphi\}$ is a set of $\mathcal{L}$-formulas, then
$$\Gamma\vdash_{\mathrm{MSL}}\varphi  \quad\text{iff}\quad \Gamma\vdash^{\mathcal{L}}_{\mathrm{MSL}^\approx}\varphi.$$
\end{proposition}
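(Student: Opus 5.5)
The forward direction is immediate. Since $\mathcal{L}\leq\mathcal{L}^\approx$, every $\mathcal{L}$-derivation of $\varphi$ from $\Gamma$ is already an $\mathcal{L}^\approx$-derivation. The equality axioms of $\mathcal{L}$ are among those of $\mathcal{L}^\approx$, so the same derivation witnesses $\Gamma,\mathrm{Eq}^\approx_\mathcal{L}\vdash^{\mathcal{L}^\approx}_\mathrm{MSL}\varphi$.

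For the converse, I would not use Proposition~\ref{ConLan} directly. The trouble is that the extra symbol $\approx$ comes with the extra hypotheses $\mathrm{Eq}^\approx_\mathcal{L}$, and the translation in that proof sends $\approx$-atoms to $\top$, which does not respect the first axiom $x^\mi\approx y^\mi\leftrightarrow x^\mi=y^\mi$. Instead I would define a translation $(\cdot)^\dagger$ that eliminates $\approx$, mimicking the proof of Proposition~\ref{ConLan}:
$$(t_1^{\mi}\approx t_2^{\mj})^\dagger:=\begin{cases} t_1=t_2 & \text{if } \mi=\mj,\\ \bot & \text{else,}\end{cases}$$
with $(\cdot)^\dagger$ the identity on other atoms and commuting with connectives and quantifiers. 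Then $\psi^\dagger$ is a well-sorted $\mathcal{L}$-formula, and $\psi^\dagger=\psi$ for $\mathcal{L}$-formulas. Since whether $\approx$ relates terms of equal sort depends only on the sorts, the translation commutes with substitution of well-sorted terms of the same sort: $(\psi[t^\mi/x^\mi])^\dagger=\psi^\dagger[t^\mi/x^\mi]$. Consequently every rule application in an $\mathcal{L}^\approx$-derivation maps to an application of the same rule in $\mathcal{L}$, with the eigenvariable conditions preserved because free variables are unchanged.

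It remains to check that each axiom is sent to something derivable in $\mathrm{MSL}$ over $\mathcal{L}$. For the strict equality axioms this is trivial, since they are fixed by $(\cdot)^\dagger$.
\begin{itemize}
\item The first axiom of $\mathrm{Eq}^\approx_\mathcal{L}$ becomes $x=y\leftrightarrow x=y$.
\item For symmetry and transitivity, split on whether the sorts coincide. When they do, we recover the ordinary axioms. When they do not, some premise translates to $\bot$ and the formula follows by $\bot E$. In transitivity with $\mi=\mk\neq\mj$, both premises are $\bot$, so this case is also fine.
\item The congruence axiom for $R$ becomes trivial whenever some $\mi_l\neq\mj_l$. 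When all sorts match, it is exactly the $\mathcal{L}$-axiom for $R$.
\item The function axiom is handled the same way. When all argument sorts agree, the output sorts agree because $\mathrm{rank}(f)$ is a partial function, and the conclusion translates to $f(\vec x)=f(\vec y)$, which is the $\mathcal{L}$-axiom.
\end{itemize}

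Applying $(\cdot)^\dagger$ step by step to a derivation of $\varphi$ from $\Gamma\cup\mathrm{Eq}^\approx_\mathcal{L}$ therefore gives a derivation of $\varphi$ from $\Gamma$ together with derivable $\mathcal{L}$-formulas, which yields $\Gamma\vdash_\mathrm{MSL}\varphi$.

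The main obstacle is exactly the interaction with non-strict function symbols that the preceding Remark warns about. Translating cross-sort $\approx$ as $\bot$ may look suspicious, since the Remark shows that $\approx$ can propagate across sorts. However, the translation only has to make each axiom instance valid, not preserve intended meaning, and the case check on the function axiom goes through precisely because equal input sorts force equal output sorts. A second point to double-check is the substitution lemma when a term of an unexpected sort is substituted; this is excluded by well-sortedness of the rules.
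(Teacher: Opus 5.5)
Your proposal is correct and follows the paper's own argument. The paper also replaces $t_1^{\mi}\approx t_2^{\mi}$ by $t_1=t_2$ and cross-sort $\approx$-atoms by $\bot$ throughout the derivation, which yields an $\mathcal{L}$-derivation of $\varphi$ from $\Gamma$ plus derivable formulas; your case check of $\mathrm{Eq}^\approx_\mathcal{L}$ spells out what the paper leaves implicit. One small slip: the $\mathrm{MSL}$ equality axioms of $\mathcal{L}^\approx$ include the $=$-congruence instances for the relation symbol $\approx$ itself, and these are not fixed by $(\cdot)^\dagger$, but their images $x_1=y_1\land x_2=y_2\to(\bot\to\bot)$ and $x_1=y_1\land x_2=y_2\to(x_1=x_2\to y_1=y_2)$ are plainly derivable over $\mathcal{L}$.
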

\begin{proof}
We prove only the right-to-left direction. Let $\mathcal{D}$ be a derivation 
of $\Gamma\vdash^{\mathcal{L}}_{\mathrm{MSL}^\approx}\varphi,$ and replace 
each instance of $t_1^\mi\approx t_2^\mj$ where $\mi\neq \mj$ with $\bot$, and each instance of 
$t_1^\mi\approx t_2^\mi$ with $t_1^\mi = t_2^\mi$. This results in an $\mathcal{L}$-derivation of $\varphi$ from $\Gamma$ together with a set of tautologies.

\end{proof}

\subsection{The standard translation from $\mathrm{MSL}$ to $\mathrm{FOL}$}\label{sec:translation}

Fix a many-sorted signature $\mathcal{L}$. Our target unsorted signature $\mathcal{L}^*$ has the same constant, relation, and function symbols as $\mathcal{L}$, together with a fresh unary relation symbol $Q_\mi$ for each $\mi\in \mathrm{Sort}$. We call these \emph{sort predicates} and write $Q_\mi t$ for $Q_\mi(t)$.

\medskip

The translation $\varphi\mapsto\varphi^*$ is defined as follows:
\begin{enumerate}
    \item $\bot^*:=\bot$;
    \item $(t_1= t_2)^*:=t_1 = t_2$;
    \item $R(t_1,\ldots,t_n)^*:=R(t_1,\ldots,t_n)$;
    \item $(\varphi\star \psi)^*:=\varphi^*\star \psi^*$, for $\star\in \{{\land},{\lor},{\to}\}$;
    \item $(\forall x^\mi \varphi)^*:=\forall x(Q_\mi x\to \varphi^*)$;
    \item $(\exists x^\mi\varphi)^*:=\exists x(Q_\mi x\land \varphi^*)$.
\end{enumerate}
We also extend this to $\mathcal{L}^\approx$ by letting:
\begin{enumerate}
    \item[7.] $(t_1\approx t_2)^*:=t_1=t_2$.
\end{enumerate}

\smallskip

\noindent To accompany this translation, we take $J$ to be (the universal closure of) the following:
\begin{itemize}
    \item $\exists x\, Q_\mi x$ for each $\mi\in \mathrm{Sort}$;
    \item $Q_{\mi_1}x_1\land\ldots\land Q_{\mi_n}x_n\rightarrow Q_\mi f(x_1,\ldots,x_n)$ for each $f\in \mathrm{Fun}$ with $\mathrm{rank}(f)(\mi_1,\dots,\mi_n)=\mi$.
\end{itemize}

\smallskip

\noindent As an instance of the second point, we get:
\begin{itemize}
    \item $Q_\mi c$ for each constant symbol $c$ with sort $\mi$.
\end{itemize}
When we wish to make the underlying many-sorted signature explicit, we write
$J_\mathcal{L}$ instead of $J$.

\smallskip

For well-sorted \emph{sentences} $\Gamma\cup\{\varphi\}$, the correctness of the translation $(\cdot)^*$ can be stated as:
$$\Gamma\vdash_{\mathrm{MSL}}\varphi\quad\text{iff}\quad\Gamma^*,J\vdash_{\mathrm{FOL}}\varphi^*.$$
One direction is well known and relatively straightforward: if $\Gamma\vdash_{\mathrm{MSL}}\varphi$, then one can simulate the many-sorted rules in $\mathrm{FOL}$ to obtain $\Gamma^*, J\vdash_{\mathrm{FOL}}\varphi^*$. We give the details in Appendix~\ref{Forwarddirection}. In the rest of the paper, we will focus on the converse.

\begin{remark} 
Equivalently, we can state the same claim for well-sorted \emph{formulas} as
$$\Gamma\vdash_{\mathrm{MSL}}\varphi\quad\text{iff}\quad \Gamma^*,J,\{Q_\mi x:x^\mi\in \mathrm{FV}(\Gamma,\varphi)\}\vdash_{\mathrm{FOL}}\varphi^*,$$
where $\mathrm{FV}(\Gamma,\varphi)$ is the set of free variables in $\Gamma\cup\{\varphi\}.$ This formulation is equivalent to the original as replacing the free variables $x^\mi\in\mathrm{FV}(\Gamma,\varphi)$ with fresh constants of the corresponding sorts turns $\Gamma\cup\{\varphi\}$ into a set of sentences. However, 
the additional assumptions $Q_\mi x$ in this formulation are necessary. For example,
$$\forall x^\mi P(x^\mi)\vdash_{\mathrm{MSL}}P(x^\mi)\quad\text{but}\quad\forall x(Q_\mi x\to P(x)),J\not\vdash_{\mathrm{FOL}}P(x).$$
\end{remark}

\begin{remark}
By renaming variables, we can assume that any derivation of $\Gamma^*,J\vdash_{\mathrm{FOL}}\varphi^*$ uses only variables from $\mathrm{Var}_\mathcal{L}$.
\end{remark}

\section{Correctness of the standard translation}\label{The general case}

\begin{lemma}\label{LibLemma} Suppose that $\mathcal{L}$ contains only liberal relation and function symbols and finitely many sorts. If $\Gamma\cup \{\varphi\}$ is a set of $\mathcal{L}$-sentences, then
    $$\Gamma\vdash_{\mathrm{MSL}}\varphi\quad\text{iff}\quad\Gamma^*,J\vdash_{\mathrm{FOL}}\varphi^*.$$
\end{lemma}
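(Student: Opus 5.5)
The plan is to prove only the right-to-left direction; the left-to-right direction is the standard simulation given in Appendix~\ref{Forwarddirection}. Liberal symbols and finitely many sorts together make every unsorted $\mathcal{L}^*$-formula readable as a many-sorted one once its free variables are given sorts. Unsorted quantifiers become finite conjunctions or disjunctions over the sorts, and sort predicates and ill-sorted equalities become $\top$ or $\bot$. This gives a backward translation $(\cdot)^\sigma$ that is applied step by step to an unsorted derivation.

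\emph{Construction.} Let $\mathrm{Sort}=\{\mi_1,\dots,\mi_m\}$. A \emph{sort assignment} $\sigma$ maps (finitely many) unsorted variables to many-sorted variables, with distinct variables going to distinct ones.
\begin{itemize}
\item \emph{Terms.} Put $x^\sigma:=\sigma(x)$ and $f(t_1,\dots,t_n)^\sigma:=f(t_1^\sigma,\dots,t_n^\sigma)$. Because every function symbol is liberal, $\mathrm{rank}(f)$ is total, so $t^\sigma$ is always well sorted with a determined sort.
\item \emph{Formulas.} $(Q_\mi t)^\sigma$ is $\top$ if $\mathrm{sort}(t^\sigma)=\mi$ and $\bot$ otherwise. $(t_1=t_2)^\sigma$ is $t_1^\sigma=t_2^\sigma$ if the sorts agree and $\bot$ otherwise. $R(\vec t)^\sigma:=R(\vec t^\sigma)$, which is well sorted since $R$ is liberal. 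Connectives commute with $(\cdot)^\sigma$.
\item \emph{Quantifiers.} $(\forall x\,A)^\sigma:=\bigwedge_{k\le m}\forall y_k^{\mi_k}A^{\sigma[x\mapsto y_k]}$ and $(\exists x\,A)^\sigma:=\bigvee_{k\le m}\exists y_k^{\mi_k}A^{\sigma[x\mapsto y_k]}$, with each $y_k$ a fresh variable of sort $\mi_k$. This is where finiteness of $\mathrm{Sort}$ is used.
\end{itemize}

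\emph{Key steps.}
\begin{enumerate}
\item[(1)] Prove a substitution lemma: $(A[t/x])^\sigma$ is $A^{\sigma[x\mapsto y]}[t^\sigma/y]$ for $y$ fresh of sort $\mathrm{sort}(t^\sigma)$, up to renaming of bound variables. This follows by induction on $A$; renaming-invariance of $(\cdot)^\sigma$ follows from the choice of fresh variables.
\item[(2)] By induction on an $\mathrm{FOL}$ derivation of $A$ from $\Pi$, show that for every $\sigma$ covering the free variables in play, $\Pi^\sigma\vdash_{\mathrm{MSL}}A^\sigma$.
\begin{itemize}
\item Propositional rules, including $RAA$, commute with $(\cdot)^\sigma$.
\item For $\forall E$ with term $t$, select the conjunct whose sort is $\mathrm{sort}(t^\sigma)$, apply many-sorted $\forall E$, and use (1). $\exists I$ is dual.
\item For $\forall I$ on $x$, apply the induction hypothesis once for each sort, with $\sigma[x\mapsto y_k]$, and then conjoin. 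The eigenvariable condition transfers because $y_k$ is fresh and $x$ does not occur in the open assumptions.
\item $\exists E$ is handled by $\lor E$ over the $m$ disjuncts followed by many-sorted $\exists E$.
\end{itemize}
\item[(3)] Check the axioms.
\begin{itemize}
\item Equality axioms: under $\sigma$ each becomes either a many-sorted equality axiom or an implication with a $\bot$ antecedent. For transitivity with mixed sorts, some antecedent is $\bot$. For congruence with mismatched sorts, the antecedent is again $\bot$.
\item $J$: $(\exists x\,Q_\mi x)^\sigma$ has the disjunct $\exists y^\mi\top$, which is derivable from $\top$ by $\exists I$ with a variable of sort $\mi$. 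The function axioms translate to conjunctions of implications whose antecedent is $\bot$ or whose consequent is $\top$, by the definition of $\mathrm{rank}(f)$.
\end{itemize}
\item[(4)] Show that $(\varphi^*)^\sigma$ is $\mathrm{MSL}$-equivalent to $\varphi$ (for sentences, and by induction for formulas with $\sigma$ respecting the sorts of the free variables). For example, $(\forall x(Q_\mi x\to\psi^*))^\sigma$ is a conjunction in which the $\mi$-conjunct is $\forall y^\mi(\top\to\psi^{*\sigma'})$ and all other conjuncts are $\forall y(\bot\to\dots)$. This uses Proposition~\ref{SimInst}(i) to replace subformulas by equivalents.
\end{enumerate}
Combining (2)--(4) yields $\Gamma\vdash_{\mathrm{MSL}}\varphi$.

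The main obstacle I expect is step (2) together with the substitution lemma (1): keeping the fresh-variable bookkeeping consistent, so that the image of each rule instance is a genuine instance of the corresponding many-sorted rule, especially the eigenvariable conditions and the instantiation of $\forall E$ at terms containing bound-renamed variables. I would handle this by first normalizing the unsorted derivation so that bound and free variables are distinct and eigenvariables are unique, and by defining $(\cdot)^\sigma$ with a fixed global injection from pairs (unsorted variable, sort) into $\mathrm{Var}_\mathcal{L}$ instead of ad hoc fresh choices.
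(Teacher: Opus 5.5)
Your proposal is correct. It shares the paper's overall architecture: a reverse coding that turns unsorted quantifiers into finite conjunctions and disjunctions over the sorts, induction on the $\mathrm{FOL}$ derivation, a check of the equality axioms and $J$, and a round-trip equivalence. The central device, however, is different.

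The paper first passes to $\mathrm{MSL}^\approx$ via Proposition~\ref{LibeqConserv}. It codes $Q_\mi t$ as $\exists x^\mi(t\approx x)$ and $t_1=t_2$ as $t_1\approx t_2$. Every symbol in the image, including $\approx$, is liberal, so this coding needs no environment: free variables keep their own sorts. It also commutes on the nose with arbitrary, even sort-changing, substitutions, $(A[t^\mj/x^\mi])^\diamond=A^\diamond[t^\mj/x^\mi]$, which makes the quantifier cases immediate.

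You instead evaluate sort predicates and cross-sort equalities to $\top$ or $\bot$ on the spot. In effect this is the paper's $(\cdot)^\diamond$ composed with the collapse of $\approx$ from the proof of Proposition~\ref{LibeqConserv}: $\exists x^\mi(t^\mj\approx x)$ collapses to something equivalent to $\top$ or $\bot$ according as $\mj=\mi$ or not.

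\begin{itemize}
\item \emph{What you gain:} a proof that stays inside plain $\mathrm{MSL}$, with no detour through a conservativity result.
\item \emph{What you pay:} your translation of atoms depends on the sorts assigned to free variables. You must therefore carry $\sigma$ and quantify the induction hypothesis over all $\sigma$, extending it arbitrarily to variables that occur only inside a subderivation. You must also restrict the substitution lemma to same-sort replacement. Your fixed injection $(x,\mi)\mapsto$ a variable of sort $\mi$ is the right way to make this bookkeeping exact rather than only up to $\alpha$-renaming.
\end{itemize}

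There is also a side trade-off in Theorem~\ref{Optaxthrm}. Under your coding, the disjointness and covering axioms of $J^+$ translate directly into derivable formulas, whereas the paper has to add $\Sigma_1$ and collapse afterwards. Conversely, the paper's $\approx$-based coding is exactly what it reuses there, together with $\zeta_R$, to handle the axioms $R(x_1,\dots,x_n)\to Q_{\mi_k}x_k$. Those axioms would need an extra device in your setting, for example sending $R$-atoms that are ill-sorted in $\mathcal{L}$ to $\bot$.
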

\begin{proof}
    We only show the right-to-left direction. We recall that $\Gamma\vdash_{\mathrm{MSL}}\varphi$ iff $\Gamma\vdash_{\mathrm{MSL}^\approx}^\mathcal{L}\varphi$, and we define the reverse coding $(\cdot)^\diamond$ as follows:

    \begin{enumerate}
        \item $\bot^\diamond:=\bot$;
        \item $\left(Q_\mi t\right)^\diamond:=\exists x^\mi(t\approx x)$ where $x^\mi$ is a fresh variable; 
        \item $\left(t_1= t_2\right)^\diamond:= t_1\approx t_2 $;
        \item $\left(R(t_1,\dots,t_n)\right)^\diamond:=R(t_1,\dots,t_n)$;
        \item $(A \star B)^\diamond := A^\diamond \star B^\diamond$ for $\star \in \{\land,\lor,\to\}$;
        \item $\left(\exists x A\right)^\diamond:=\bigvee_{\mi\in \mathrm{Sort}}  \exists x_\mi^\mi \left(A^\diamond[x_\mi/x]\right)$ where each  $x_\mi$ is a fresh variable of sort $\mi$;
        \item $\left(\forall x A\right)^\diamond:=\bigwedge_{\mi\in \mathrm{Sort}}  \forall x_\mi^\mi \left(A^\diamond[x_\mi/x]\right)$ where each $x_\mi$ is a fresh variable of sort $\mi$.
    \end{enumerate}
Notice that $\left(\exists x A\right)^\diamond$ and $\left(\forall x A\right)^\diamond$ are well defined because we only have finitely many sorts and all the relation and function symbols are liberal. By simple inductions on the complexity of formulas, we get $$\vdash^\mathcal{L}_{\mathrm{MSL}^\approx}\psi\leftrightarrow \psi^{*\diamond}\quad\text{and}\quad (A[t^\mj/x^\mi])^\diamond= A^\diamond[t^\mj/x^\mi].$$
Next, an induction on the length of derivations gives 
$$\Pi\vdash_\mathrm{FOL} A\quad\text{implies}\quad \Pi^\diamond \vdash^\mathcal{L}_{\mathrm{MSL}^\approx}A^\diamond$$
for all $\mathcal{L}^*$-formulas $\Pi$ and $A$. We include the case for ${\forall} I$ as a demonstration: Assume the last step in the derivation was ${\forall} I$, then $A$ is of the form $\forall xB$ and $\Pi\vdash_\mathrm{FOL}B$ where $x$ is not free in $\Pi$. For each $\mi$, let $x_\mi$ be a fresh variable with sort $\mi$. Since $x_\mi$ is not free in $\Pi$, we get a derivation $\Pi\vdash_\mathrm{FOL}B[x_\mi/x]$ of the same length. By our induction hypothesis, we have $\Pi^\diamond\vdash_\mathrm{MSL}B[x_\mi/x]^\diamond$. So, $\Pi^\diamond\vdash_\mathrm{MSL}\forall x_\mi^\mi B^\diamond[x_\mi/x]$, and therefore $\Pi^\diamond\vdash_\mathrm{MSL}\bigwedge_{\mi\in \mathrm{Sort}}\forall x_\mi^\mi  B^\diamond[x_\mi/x]$.

\smallskip Finally, we notice that each element of $J^\diamond$ is derivable, which gives
\begin{align*}
    \Gamma^*,J\vdash_{\mathrm{FOL}}\varphi^*\quad\text{implies}\quad &\Gamma^{*\diamond},J^\diamond\vdash^\mathcal{L}_{\mathrm{MSL}^\approx}\varphi^{*\diamond}\\
    \text{iff}\quad\quad &\Gamma\vdash^\mathcal{L}_{\mathrm{MSL}^\approx}\varphi.
\end{align*}

\end{proof}

\begin{theorem}\label{mainthrm}
    If $\Gamma\cup \{\varphi\}$ is any set of $\mathcal{L}$-sentences, then 
    $$\Gamma\vdash_{\mathrm{MSL}}\varphi\quad\text{iff}\quad\Gamma^*,J\vdash_{\mathrm{FOL}}\varphi^*.$$
\end{theorem}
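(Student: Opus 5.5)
The plan is to reduce the general case to Lemma~\ref{LibLemma} by passing to a suitable extension of the signature and then pulling back with Proposition~\ref{ConLan}. Only the right-to-left direction needs proof; the forward direction is handled in Appendix~\ref{Forwarddirection}.

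\textbf{Reduction to finitely many sorts.} Suppose $\Gamma^*,J\vdash_{\mathrm{FOL}}\varphi^*$. The derivation is finite, so it uses only finitely many premises from $\Gamma^*$ and $J$, and finitely many symbols. Let $\Gamma_0\subseteq\Gamma$ be finite with $\Gamma_0^*,J_0\vdash_{\mathrm{FOL}}\varphi^*$, where $J_0\subseteq J$ is finite. Let $S_0\subseteq\mathrm{Sort}$ be the finite set of sorts whose predicates $Q_\mi$ occur in the derivation, enlarged by the sorts mentioned in $\Gamma_0\cup\{\varphi\}$. Let $\mathcal{L}_0\leq\mathcal{L}$ be the restriction to the sorts $S_0$ and to the symbols occurring, with ranks cut down to tuples over $S_0$.

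One must check that $\Gamma_0^*,J_{\mathcal{L}_0}\vdash\varphi^*$. Any $Q_\mk$ with $\mk\notin S_0$ does not occur, so the only issue is the axioms of $J$ for function ranks involving sorts outside $S_0$. These mention $Q_\mk$ and hence are not used. Nonemptiness axioms for $\mk\notin S_0$ are likewise not used. Relation, function, or constant symbols not occurring in $\Gamma_0,\varphi$ but used in the derivation can be replaced by fresh variables or by $\top$ in a routine way. Alternatively, they can be kept in $\mathcal{L}_0$, since only finitely many occur.

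\textbf{Making everything liberal.} Let $\mathcal{L}_1\geq\mathcal{L}_0$ have the same finite set of sorts $S_0$. Make every relation symbol liberal by setting $\mathrm{rank}_1(R)=S_0^n$. For each function symbol, extend its rank to a total function on $S_0^n$, keeping the existing values and sending missing tuples to a fixed sort $\mi_0\in S_0$. Then $\mathcal{L}_0\leq\mathcal{L}_1$, and $\mathcal{L}_1$ satisfies the hypotheses of Lemma~\ref{LibLemma}. The translation $(\cdot)^*$ of $\mathcal{L}_0$-sentences is the same in both signatures. However, $J_{\mathcal{L}_1}\supseteq J_{\mathcal{L}_0}$, so $\Gamma_0^*,J_{\mathcal{L}_1}\vdash_{\mathrm{FOL}}\varphi^*$ holds by weakening. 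Lemma~\ref{LibLemma} then gives $\Gamma_0\vdash^{\mathcal{L}_1}_{\mathrm{MSL}}\varphi$. Proposition~\ref{ConLan} yields $\Gamma_0\vdash^{\mathcal{L}_0}_{\mathrm{MSL}}\varphi$, and a second application of Proposition~\ref{ConLan} in the other direction of the inclusion shows that derivations in $\mathcal{L}_0$ are derivations in $\mathcal{L}$. Hence $\Gamma\vdash^{\mathcal{L}}_{\mathrm{MSL}}\varphi$.

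\textbf{Main obstacle.} The delicate step is the first one: restricting $J$ to the sorts and symbols actually involved. One must ensure the FOL derivation remains valid over $\mathcal{L}_0^*$. This concerns symbols of $\mathcal{L}$ whose ranks mention sorts outside $S_0$ but which still occur in the derivation without $Q$-axioms. The cleanest fix is to define $S_0$ as all sorts appearing in any rank of any symbol occurring in the derivation, which is still finite. Then $\mathcal{L}_0$ simply contains those symbols with full ranks, and $J_{\mathcal{L}_0}\subseteq J_\mathcal{L}$ covers every $J$-axiom the derivation can mention, apart from axioms for sorts not in $S_0$ that contain no occurring symbols. Such axioms involve $Q_\mk$ for $\mk\notin S_0$ and therefore cannot appear.
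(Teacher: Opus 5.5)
Your argument is the paper's: reduce to finitely many sorts, pass to an extension in which every symbol is liberal so that $J$ only grows, apply Lemma~\ref{LibLemma}, and pull back with Proposition~\ref{ConLan}. The one variation is that you send undefined function values to an existing sort $\mi_0$ rather than to a fresh sort $\mj_0$. This is harmless: Proposition~\ref{ConLan} covers arbitrary rank extensions via $\mathcal{L}^s$, and the paper makes exactly your choice in the proof of Theorem~\ref{Optaxthrm}.

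The paper treats the finite-sort reduction as a one-line ``without loss of generality'' step. In your more detailed version, three points need correcting, though none affects the main line.

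\textbf{The ``cleanest fix'' is wrong in this generality.} Ranks of non-strict symbols may be infinite, e.g.\ a liberal $R$ when $\mathrm{Sort}$ is infinite. So the set of all sorts appearing in the ranks of the occurring symbols need not be finite.

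\textbf{Your first version is the right one, with one patch.} Cut ranks down to $S_0$. Since Definition~\ref{def:signature} requires ranks to be nonempty, enlarge $S_0$ by the sorts of one rank tuple of each symbol occurring in the derivation.

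\textbf{Replacing function symbols by fresh variables is not routine.} It would turn a sort axiom $Q_{\mi_1}x\to Q_\mi f(x)$ into $Q_{\mi_1}x\to Q_\mi z$, which no longer follows from the sort axioms. Keeping those symbols in the signature is what works.

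Finally, the passage from $\mathcal{L}_0$ back to $\mathcal{L}$ is plain monotonicity, since every $\mathcal{L}_0$-derivation is already an $\mathcal{L}$-derivation. It is not an application of Proposition~\ref{ConLan}, which only goes from larger to smaller signatures.
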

\begin{proof}
   We show only the right-to-left direction. Since only finitely many sorts appear in a derivation of $\Gamma^*,J\vdash_{\mathrm{FOL}}\varphi^*,$ we may assume without loss of generality that $\mathrm{Sort}$ is finite. Fix a fresh sort $\mj_0 \notin \mathrm{Sort}$ and let $\mathcal{L}'$ be the signature with $\mathrm{Sort}' := \mathrm{Sort} \cup \{\mj_0\}$ whose relation and function symbols are those of $\mathcal{L}$, all taken to be liberal, with
$$
\mathrm{rank}'(f)(\mi_1,\ldots,\mi_n) := \begin{cases} \mathrm{rank}(f)(\mi_1,\ldots,\mi_n) & \text{if } (\mi_1,\ldots,\mi_n) \in \mathrm{dom}(\mathrm{rank}(f)), \\ \mj_0 & \text{otherwise,} \end{cases}
$$
for each $n$-ary function symbol $f$ and all $(\mi_1,\ldots,\mi_n) \in (\mathrm{Sort}')^n$. Now, an $\mathcal{L}'$-term has a sort other than $\mj_0$ if and only if it is a well-sorted $\mathcal{L}$-term. Clearly, $J\subseteq J'$, where $J'$ are the sort axioms for $\mathcal{L}'$.
So, $\Gamma^*,J'\vdash_{\mathrm{FOL}}\varphi^*$. The previous lemma tells us $\Gamma\vdash^{\mathcal{L}'}_\mathrm{MSL}\varphi$. Thus, $\Gamma\vdash_\mathrm{MSL}^\mathcal{L}\varphi$ by Proposition~\ref{ConLan}.

\end{proof}

\begin{corollary}\label{mainthrmlibeq}
    If $\Gamma\cup \{\varphi\}$ is a set of $\mathcal{L}^\approx$-sentences, then 
    $$\Gamma\vdash^\mathcal{L}_{\mathrm{MSL}^\approx}\varphi\quad\text{iff}\quad\Gamma^*,J\vdash_{\mathrm{FOL}}\varphi^*.$$
\end{corollary}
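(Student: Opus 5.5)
The plan is to rerun the two-step argument of Lemma~\ref{LibLemma} and Theorem~\ref{mainthrm} with $\approx$ present, rather than to derive the corollary from Theorem~\ref{mainthrm} as stated. The direct route does not work. Viewing $\approx$ as an ordinary relation symbol of $\mathcal{L}^\approx$ and applying Theorem~\ref{mainthrm} to $\Gamma\cup\mathrm{Eq}^\approx_\mathcal{L}$ gives the right statement only for a translation that keeps $\approx$ as a relation symbol. Clause~7 instead sends $\approx$ to $=$, and there is no converse substitution taking $=$ back to $\approx$ on cross-sort atoms.

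The left-to-right direction is the simulation of Appendix~\ref{Forwarddirection}. The only extra point is that each element of $(\mathrm{Eq}^\approx_\mathcal{L})^*$ is derivable in $\mathrm{FOL}$ from the ordinary equality axioms, because clause~7 turns every $\approx$-axiom into a relativized instance of reflexivity, symmetry, transitivity, or congruence for $=$.

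For the right-to-left direction I first handle signatures with finitely many sorts and only liberal symbols, as in Lemma~\ref{LibLemma}. I use the same reverse coding $(\cdot)^\diamond$ and check that its three ingredients survive when $\psi$ ranges over $\mathcal{L}^\approx$-formulas.
\begin{itemize}
\item First, $\vdash^\mathcal{L}_{\mathrm{MSL}^\approx}\psi\leftrightarrow\psi^{*\diamond}$. The only new atomic case is $(t_1\approx t_2)^{*\diamond}=(t_1=t_2)^\diamond=t_1\approx t_2$, which is literally $\psi$.
\item Second, $\Pi\vdash_\mathrm{FOL}A$ implies $\Pi^\diamond\vdash^\mathcal{L}_{\mathrm{MSL}^\approx}A^\diamond$. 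This step is unchanged, since the $\diamond$-images of the $\mathrm{FOL}$ equality axioms are exactly instances of $\mathrm{Eq}^\approx_\mathcal{L}$.
\item Third, each element of $J^\diamond$ is derivable, exactly as before.
\end{itemize}
Together these give $\Gamma\vdash^\mathcal{L}_{\mathrm{MSL}^\approx}\varphi$ whenever $\Gamma^*,J\vdash_\mathrm{FOL}\varphi^*$. This is in fact the natural home of the lemma, since its proof already passes through $\mathrm{MSL}^\approx$.

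Next I reduce to that case as in Theorem~\ref{mainthrm}. I restrict to the finitely many sorts that occur, adjoin a junk sort $\mj_0$, make every symbol liberal, and send ill-sorted argument tuples of each $f$ to $\mj_0$. This gives a signature $\mathcal{L}'$ with $J\subseteq J'$, so the previous step yields $\Gamma\vdash^{\mathcal{L}'}_{\mathrm{MSL}^\approx}\varphi$.

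The main obstacle is pulling this back to $\mathcal{L}$. Proposition~\ref{ConLan} is stated without $\approx$, and Proposition~\ref{LibeqExt} needs strict function symbols. The Remark preceding Proposition~\ref{LibeqExt} shows that adding symbols can genuinely create new $\approx$-theorems. I would try to extend the map $(\cdot)^\circ$ from the proof of Proposition~\ref{ConLan} after first passing to $(\cdot)^s$, setting
\[
(t_1\approx t_2)^\circ:=t_1^\circ\approx t_2^\circ \quad\text{when both sorts lie in }\mathrm{Sort},
\]
and $\top$ (or $\bot$) otherwise.

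The difficulty is the transitivity axiom $x\approx y\wedge y\approx z\to x\approx z$ with $y$ of sort $\mj_0$: under this map its image is not derivable. To repair this, I would try to show that the derivation from the first step can be normalized so that no $\approx$-atom with a $\mj_0$-side is ever used essentially. The relevant observation is that no $\mathrm{FOL}$ derivation from $J'$ can prove $Q_{\mj_0}t\wedge Q_\mi s$-style facts linking junk terms to real ones beyond what $J$ already gives. If that fails, the fallback is to do the junk-sort elimination on the $\mathrm{FOL}$ side before applying $(\cdot)^\diamond$: interpret $Q_{\mj_0}$ as $\bot$-free and simply drop it. That means working directly with $J$ in place of $J'$, and defining $(\forall x A)^\diamond$ only over the sorts of $\mathcal{L}$, with ill-sorted atoms coded as $\bot$.
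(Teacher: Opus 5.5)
\paragraph{Where your proposal has a gap.}
Your left-to-right direction is fine. So is the finite-sort, all-liberal case: it is just the proof of Lemma~\ref{LibLemma}, which already runs in $\mathrm{MSL}^\approx$ and is unaffected by clause~7. The gap is everything after that. For general $\mathcal{L}$ you must pull $\Gamma\vdash^{\mathcal{L}'}_{\mathrm{MSL}^\approx}\varphi$ back along the junk-sort extension, and you do not prove this.

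No $\top/\bot$ choice for mixed atoms makes $(\cdot)^\circ$ work:
\begin{itemize}
\item $\top$ breaks transitivity through $\mj_0$, as you note.
\item $\bot$ (the choice in Appendix~\ref{app:libeq}) also fails. Take the $\mathrm{Eq}^\approx_{\mathcal{L}'}$-instance $x^\mi\approx y^\mk\to f(x^\mi)\approx f(y^\mk)$ with $(\mi)\in\mathrm{dom}(\mathrm{rank}(f))$ but $(\mk)\notin\mathrm{dom}(\mathrm{rank}(f))$. Its image is $\neg(x^\mi\approx y^\mk)$, which is not derivable over $\mathcal{L}$.
\end{itemize}
That same instance shows that real--junk $\approx$-atoms are produced by the congruence axioms themselves. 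So your first repair (normalizing such atoms away) is a hope, not an argument.

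Your fallback reintroduces exactly the problem the junk sort was introduced to solve. An $\mathrm{FOL}$ derivation from $J$ may apply ${\forall}E$ or ${\exists}I$ with a term such as $f(c)$ that is ill-sorted in $\mathcal{L}$. If ill-sorted atoms are coded as $\bot$, the step from $(\forall x\,R(x))^\diamond$ to $(R(f(c)))^\diamond=\bot$ cannot be simulated. Coding them as $\top$ fails dually under $\neg$. In addition, $A^\diamond[x_\mi/x]$ stops being well sorted once $x$ occurs under a non-liberal $f$.

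\paragraph{Comparison with the paper.}
The paper's proof is exactly the route you set aside. It reads $\Gamma\vdash^\mathcal{L}_{\mathrm{MSL}^\approx}\varphi$ as $\Gamma,\mathrm{Eq}^\approx_\mathcal{L}\vdash_{\mathrm{MSL}}\varphi$ over $\mathcal{L}^\approx$. It then applies Theorem~\ref{mainthrm} to that signature (with the same $J$, since only a relation symbol is added). Finally it discards $(\mathrm{Eq}^\approx_\mathcal{L})^*$ as $\mathrm{FOL}$-derivable under clause~7.

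Your objection to this is well taken. Instantiated at $\mathcal{L}^\approx$, Theorem~\ref{mainthrm} is proved for the translation that keeps $\approx$ as a relation symbol (clause~3). Under that translation $(\mathrm{Eq}^\approx_\mathcal{L})^*$ is not a set of tautologies, and the paper does not say why the theorem also holds under clause~7. Rerunning its proof under clause~7 yields $\Gamma\vdash^{\mathcal{L}'}_{\mathrm{MSL}^\approx}\varphi$ and then needs precisely your pullback. Neither Proposition~\ref{ConLan} (which has no $\approx$) nor Proposition~\ref{LibeqExt} (strict function symbols only) supplies it.

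The step itself is true. The Remark's phenomenon requires a new symbol linking two existing sorts, whereas here new values land only in $\mj_0$. Semantically, one can quotient by $\approx$, interpret ill-sorted applications arbitrarily, and let $\mj_0$ range over the whole quotient. What is missing is an effective, syntactic argument for it, which is the point of the paper. As submitted, your proposal proves the corollary only when $\mathcal{L}$ has finitely many sorts and only liberal symbols.
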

\begin{proof}
Notice that $\left(\mathrm{Eq}^\approx_\mathcal{L}\right)^*$ is a set of $\mathrm{FOL}$ tautologies. Moreover, $\mathcal{L}^\approx$ only adds a relation symbol to $\mathcal{L}$, so it has the same set of sort axioms. We now have

\begin{align*}
    \Gamma\vdash^\mathcal{L}_{\mathrm{MSL}^\approx}\varphi\quad&\text{iff}\quad\Gamma, \mathrm{Eq}^\approx_\mathcal{L}\vdash_{\mathrm{MSL}}\varphi\\
    &\text{iff}\quad \Gamma^*,J, (\mathrm{Eq^\approx_\mathcal{L}})^*\vdash_{\mathrm{FOL}}\varphi^*\\
    &\text{iff}\quad\Gamma^*,J\vdash_{\mathrm{FOL}}\varphi^*.
\end{align*}
    
\end{proof}

\section{Optional sort axioms}\label{sec:OptAx}
Some authors add additional sort axioms beyond those in $J$. We write $J^+$ for $J$ together with the universal closures of:
\begin{itemize}
    \item $\neg \left(Q_\mi x\land Q_\mj x\right)$ for $\mi\neq \mj$;
    \item $R(x_1,\dots,x_n)\to Q_{\mi_k}x_k$ if $R$ is a strict relation symbol with $\mathrm{rank}(R)=(\mi_1,\ldots,\mi_n)$ and $1\leq k\leq n$.
\end{itemize}
If $\mathrm{Sort}$ is finite, we also include:
\begin{itemize}
    \item $\bigvee_{\mi\in \mathrm{Sort}}Q_\mi x$.
\end{itemize}

\begin{remark}
Again, we must be careful in the presence of liberal equality as, for $\mi\neq\mj$,
$$
J^+\vdash_{\mathrm{FOL}}\left(\forall x^\mi\forall y^\mj\,\neg(x^\mi\approx y^\mj)\right)^*
\quad\text{but}\quad
\not\vdash^{\mathcal{L}}_{\mathrm{MSL}^\approx}\forall x^\mi\forall y^\mj\,\neg(x^\mi\approx y^\mj).
$$
So the additional axioms are not conservative in the presence of liberal equality.
\end{remark}

\begin{theorem}\label{Optaxthrm}
If $\Gamma\cup\{\varphi\}$ is a set of $\mathcal{L}$-sentences, then
$$
\Gamma\vdash^{\mathcal{L}}_{\mathrm{MSL}}\varphi
\quad\text{iff}\quad
\Gamma^*,J^+\vdash_{\mathrm{FOL}}\varphi^*.
$$
\end{theorem}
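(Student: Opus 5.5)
Left-to-right is immediate from Theorem~\ref{mainthrm}, since $J\subseteq J^+$. For right-to-left I would repeat the proof of Theorem~\ref{mainthrm}, but with a modified reverse coding that makes every element of $J^+$ derivable. Concretely, given a derivation of $\Gamma^*,J^+\vdash_{\mathrm{FOL}}\varphi^*$, I would first reduce to finitely many sorts. I would also make every symbol strict via Proposition~\ref{StrictIsSufficient}, noting that $(\varphi^s)^*$ is just a renaming of relation and function symbols. There is one subtlety here: the axiom $R(\vec x)\to Q_{\mi_k}x_k$ is only imposed for strict $R$. So I would keep the original non-strict symbols untouched, and only use strictness to read off the intended sort of each argument place.

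The key change is in the coding $(\cdot)^\diamond$ from Lemma~\ref{LibLemma}, where $(Q_\mi t)^\diamond$ is taken to be $\exists x^\mi(t\approx x)$. I would enlarge $\mathcal{L}$ to $\mathcal{L}'$ exactly as in Theorem~\ref{mainthrm}: add a junk sort $\mj_0$ and make all symbols liberal, with ill-sorted applications landing in $\mj_0$. Then I would check the new axioms of $J^+$ under $\diamond$, working in $\mathrm{MSL}^\approx$ over $\mathcal{L}'$ together with extra auxiliary axioms $\Delta$, namely:
\begin{itemize}
\item the universal closures of $\neg(x^\mi\approx y^\mj)$ for $\mi\neq\mj$ (this handles disjointness);
\item $\neg R(x_1^{\mj_1},\dots,x_n^{\mj_n})$ whenever $(\mj_1,\dots,\mj_n)\neq\mathrm{rank}(R)$ (this handles the $R$-axioms, since a true $R$-instance then forces $x_k$ to have the intended sort).
\end{itemize}
The exhaustivity axiom $\bigvee_{\mi}Q_\mi x$ has translation $\bigwedge_{\mj\in\mathrm{Sort}'}\forall y^\mj\bigvee_{\mi\in\mathrm{Sort}}\exists x^\mi(y\approx x)$. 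This fails at $\mj=\mj_0$, so I would drop the junk sort in that case. When $\mathrm{Sort}$ is finite and all symbols are strict, the only way to avoid junk is to send ill-sorted applications into some existing sort, and I would have to check that this is harmless. With these choices, $(J^+)^\diamond$ becomes derivable from $\mathrm{Eq}^\approx\cup\Delta$, and the induction on derivations from Lemma~\ref{LibLemma} goes through unchanged. That gives $\Gamma,\Delta\vdash^{\mathcal{L}'}_{\mathrm{MSL}^\approx}\varphi$.

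The last step is to eliminate $\Delta$ and the auxiliary material. Here I would use the same trick as Proposition~\ref{LibeqConserv}: replace each $t_1^\mi\approx t_2^\mj$ with $\mi\neq\mj$ by $\bot$, and each $t^\mi_1\approx t^\mi_2$ by $=$. I would likewise replace every ill-sorted $R$-atom by $\bot$ and delete the junk sort as in Proposition~\ref{ConLan}. Under this substitution every element of $\Delta$ and $\mathrm{Eq}^\approx$ becomes a tautology or an $\mathrm{MSL}$ equality axiom. For the congruence axioms this holds because mismatched sorts make the hypothesis $\bot$. The result is $\Gamma\vdash^{\mathcal{L}}_{\mathrm{MSL}}\varphi$.

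I expect the main obstacle to be the interaction between the junk sort and exhaustivity $\bigvee_\mi Q_\mi x$. Terms like $f(a)$ with $a$ of the wrong sort do exist in unsorted logic, and exhaustivity forces them into some genuine sort. My first attempt would be to assign ill-sorted applications of $f$ to an arbitrary fixed sort $\mi_0$ with fresh liberal ranks, and to make the ill-sorted $R$-atoms false. Then in the final erasure step I would need to check that terms built from such applications never reach $\varphi$, which I would do with a $(\cdot)^\circ$-style replacement by fresh variables $z_{\mi_0}$.
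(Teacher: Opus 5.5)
Your argument is correct, and its skeleton is the paper's. You use the same extension $\mathcal{L}'$ with the same sorts and all symbols liberal (undefined inputs of $f$ sent to a fixed $\mi_0$), the same reverse coding $(\cdot)^\diamond$, and the same disjointness axioms $\forall x^\mi\forall y^\mj\,\neg(x^\mi\approx y^\mj)$ (the paper's $\Sigma_1$). You then erase $\approx$ and finish with Proposition~\ref{ConLan}. That proposition is exactly the ``harmlessness'' check you were unsure about. Since $\mathcal{L}\leq\mathcal{L}'$, it already replaces the new function applications by fresh variables $z_\mi$, so nothing further needs checking.

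Where you genuinely differ is the treatment of the $R$-axioms. The paper adds $\Sigma_2$, the literal $\diamond$-image $R(x_1^{\mj_1},\ldots,x_n^{\mj_n})\to\exists y^{\mi_k}(x_k\approx y)$, so Claim 1 needs no reasoning about these axioms. It then removes $\Sigma_2$ by a separate translation $\zeta_R$, which routes each $R$-atom through $\approx$-witnesses of the correct sorts (Claim 2), and only afterwards erases $\approx$ (Claim 3). You instead postulate $\neg R(x_1^{\mj_1},\ldots,x_n^{\mj_n})$ for ill-sorted tuples, and you add to the erasure map a clause sending ill-sorted $R$-atoms to $\bot$. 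This clause depends only on sorts, so it commutes with well-sorted substitution and leaves $\mathcal{L}$-formulas unchanged. It turns $\Delta$ into $\neg\bot$, and it turns each congruence axiom into a tautology or an $\mathrm{MSL}$ axiom: mismatched sorts make the antecedent $\bot$, and matched but ill-sorted tuples give $\bot\to\bot$. So one pass does the work of the paper's Claims 2 and 3. The extra clause would dispose of the paper's $\Sigma_2$ equally well, so the real saving is the augmented erasure, not the choice between $\Delta$ and $\Sigma_2$.

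Two parts of your write-up should simply be dropped. First, the appeal to Proposition~\ref{StrictIsSufficient} is not justified here. That proposition concerns $\mathrm{MSL}$ derivations, and an unsorted derivation of $\varphi^*$ cannot be renamed into one of $(\varphi^s)^*$. Such a derivation may apply a non-strict $R$ to terms of no determinate sort, so there is no way to choose which $R_{(\mi_1,\ldots,\mi_n)}$ to use. Nothing in your argument needs it anyway, since you only use $\mathrm{rank}(R)$ for the strict $R$ that actually have axioms in $J^+$. Second, skip the junk sort $\mj_0$ altogether and start with the same-sort extension, which makes $(\bigvee_{\mi}Q_\mi x)^\diamond$ trivially derivable.
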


\begin{proof}
We show only the right-to-left direction and assume that
$
\Gamma^*,J^+\vdash_{\mathrm{FOL}}\varphi^*.
$
Since only finitely many sorts can occur in a derivation, we may assume without loss of generality that $\mathcal{L}$ has only finitely many sorts. Fix an extension $\mathcal{L}'$ of $\mathcal{L}$ with the same sorts, and in which all relation and function symbols are liberal.\footnote{To see that such an extension exists, we can fix some $\mi_0\in \mathrm{Sort}$ and extend $\mathrm{rank}(f)$ by assigning $\mi_0$ to all previously undefined inputs.}

We take
$$
\Sigma_1:=\{\forall x^\mi\forall y^\mj\,\neg(x^\mi\approx y^\mj): \mi\neq \mj\},
$$
and let $\Sigma_2$ be the set containing
$$
\forall x_1^{\mj_1}\cdots \forall x_n^{\mj_n}
\left(
R(x_1^{\mj_1},\ldots,x_n^{\mj_n})
\to
\exists y^{\mi_k}(x_k^{\mj_k}\approx y^{\mi_k})
\right)
$$
for each strict $n$-ary relation symbol $R$ of $\mathcal{L}$ with $\mathrm{rank}(R)=(\mi_1,\ldots,\mi_n)$, each $(\mj_1,\ldots,\mj_n)\in \mathrm{Sort}^n$, and each $k\in\{1,\ldots,n\}$. 

\begin{claim}
$\Gamma,\Sigma_1,\Sigma_2\vdash^{\mathcal{L}'}_{\mathrm{MSL}^\approx}\varphi$.
\end{claim}

\begin{claimproof}[Proof of claim]
From  $\Gamma^*,J^+\vdash_{\mathrm{FOL}}\varphi^*$ we get 
$\Gamma^*,J^+,J_{\mathcal{L}'}\vdash_{\mathrm{FOL}}\varphi^*$. Now, let $(\cdot)^\diamond$ be the reverse coding from the proof of Lemma~\ref{LibLemma}. 
Since $\mathcal{L}'$ has only liberal relation and function symbols and finitely many sorts, we get
$
\Gamma,(J^+)^\diamond\vdash^{\mathcal{L}'}_{\mathrm{MSL}^\approx}\varphi.
$
Each element of $(J^+)^\diamond$ is derivable from $\Sigma_1\cup\Sigma_2$. Therefore,
$
\Gamma,\Sigma_1,\Sigma_2\vdash^{\mathcal{L}'}_{\mathrm{MSL}^\approx}\varphi.
$

\end{claimproof}

\begin{claim}\label{claimtwo}
$\Gamma,\Sigma_1\vdash^{\mathcal{L}'}_{\mathrm{MSL}^\approx}\varphi$.
\end{claim}

\begin{claimproof}[Proof of claim]
For each strict $R$ from $\mathcal{L}$ and terms $t_1,\dots,t_n$ of any sorts, we let
$$
\zeta_R(t_1,\dots,t_n):=\exists y_1^{\mi_1}\ldots \exists y_n^{\mi_n}\left(t_1\approx y_1\land \ldots \land t_n\approx y_n\land R(y_1,\ldots, y_n)\right)
$$
where $\mathrm{rank}(R)=(\mi_1,\dots, \mi_n)$ and $y_1^{\mi_1},\ldots,y_n^{\mi_n}$ are fresh and pairwise distinct. We define $(\cdot)^\zeta$ on $(\mathcal{L}')^\approx$-formulas by replacing each $R(t_1,\dots, t_n)$ with $\zeta_R(t_1,\dots, t_n)$ for strict $R$.

Now $\vdash^{\mathcal{L}'}_{\mathrm{MSL}^\approx}\psi^\zeta\leftrightarrow \psi$ for every $\mathcal{L}$-formula, and $(\psi[t^\mi/x^\mi])^\zeta=\psi^\zeta[t^\mi/x^\mi]$ for every $\mathcal{L}'$-formula. Moreover, $\Sigma_1^\zeta=\Sigma_1$, and $\vdash^{\mathcal{L}'}_{\mathrm{MSL}^\approx}\chi^\zeta$ for every $\chi\in \Sigma_2$.

By applying $(\cdot)^\zeta$ throughout a derivation of $\Gamma,\Sigma_1,\Sigma_2\vdash^{\mathcal{L}'}_{\mathrm{MSL}^\approx}\varphi$, we get $\Gamma^\zeta,\Sigma_1^\zeta,\Sigma_2^\zeta\vdash^{\mathcal{L}'}_{\mathrm{MSL}^\approx}\varphi^\zeta$. Thus,
${\Gamma,\Sigma_1\vdash^{\mathcal{L}'}_{\mathrm{MSL}^\approx}\varphi.}$
\end{claimproof}

\begin{claim}
$\Gamma\vdash^{\mathcal{L}'}_{\mathrm{MSL}}\varphi$.
\end{claim}

\begin{claimproof}[Proof of claim]
Define $(\cdot)^\circ$ on $(\mathcal{L}')^\approx$-formulas by replacing each instance of $t_1^\mi\approx t_2^\mj$ with $t_1=t_2$ if $\mi=\mj$, and with $\bot$ if $\mi\neq \mj$. Then $\psi^\circ=\psi$ for every $\mathcal{L}'$-formula $\psi$, and both $\Sigma_1^\circ$ and $\left(\mathrm{Eq}_{\mathcal{L}'}^\approx\right)^\circ$ are sets of formulas derivable in $\mathrm{MSL}^{\mathcal{L}'}$. A simple induction on the length of derivations shows that
$$
\Sigma\vdash^{\mathcal{L}'}_{\mathrm{MSL}^\approx}\psi
\quad\text{implies}\quad
\Sigma^\circ\vdash^{\mathcal{L}'}_{\mathrm{MSL}}\psi^\circ.
$$
Applying this to Claim~\ref{claimtwo}, we get
$
\Gamma\vdash^{\mathcal{L}'}_{\mathrm{MSL}}\varphi.
$

\end{claimproof}

Finally,  Proposition~\ref{ConLan} gives
$
\Gamma\vdash^{\mathcal{L}}_{\mathrm{MSL}}\varphi
$
since $\Gamma\cup\{\varphi\}$ is a set of $\mathcal{L}$-sentences and $\mathcal{L}\leq\mathcal{L}'$.

\end{proof}

\section{Second-order logic}\label{Second-order logic}

In this section, we show that van Dalen's translation from second-order logic to unsorted first-order logic goes through. Our presentation differs slightly from van Dalen's but is easily seen to be equivalent. 

\begin{definition}\label{def:SOL}
The logical signs of second-order logic ($\mathrm{SOL}$) are those of
$\mathrm{FOL}$ together with a countably infinite set
of \emph{$n$-ary predicate variables}
$\mathrm{Var}_n$ for each $n\geq 0$. These are taken to be pairwise disjoint and disjoint
from $\mathrm{Var}$. We use uppercase letters for the variables of $\mathrm{Var}_n$ and write $X^n$ to indicate $X\in \mathrm{Var}_n$. Similarly, we write $R^n$ to indicate that the relation symbol $R$ has arity $n$. A \emph{second-order term} of arity $n$ is an $n$-ary predicate variable or an $n$-ary relation symbol from $L$. We count $\bot$ as a second-order term of arity $0$.

\smallskip

Given an unsorted signature $L$, the $L$-terms of $\mathrm{SOL}$ are the $L$-terms of $\mathrm{FOL}$. The \emph{$L$-formulas} of $\mathrm{SOL}$  are defined inductively as follows:
\begin{enumerate}
    \item $\bot$ is a formula.
    \item If $t_1,\ldots,t_n$ are first-order terms, then $R^n(t_1,\ldots,t_n)$ and $X^n(t_1,\ldots,t_n)$ are formulas.
    \item If $\varphi$ and $\psi$ are formulas, then so is $\varphi\star\psi$ for $\star\in\{\land,\lor,\to\}$.
    \item If $\varphi$ is a formula, then so are $\forall x\,\varphi$, $\exists x\,\varphi$, $\forall X^n\,\varphi$, and $\exists X^n\,\varphi$.
\end{enumerate}
Note that equality is not included as a primitive. If $T^n$ is a second-order term of arity $n$, we write $\varphi[T^n/X^n]$
for the result of replacing every free instance of $X^n$ with $T^n$. 
\end{definition}

We write $\Gamma\vdash_{\mathrm{SOL}}\varphi$ to indicate that $\varphi$ is
derivable from $\Gamma$ using the rules from Table~\ref{FOLrules} together
with the second-order quantifier rules from Table~\ref{SOLrules}, 
and (the universal closure of) the following \emph{comprehension schema}:
$$
\exists X^n\,\forall x_1\ldots\forall x_n
\big(X^n(x_1,\ldots,x_n)\leftrightarrow \psi\big),
$$
where $X^n$ does not occur free in $\psi$.

\begin{table}[H]
\centering
\caption{Second-order quantifier rules for $\mathrm{SOL}$}
\label{SOLrules}
\vspace{1em}
\footnotesize\begin{tabular}{ccc}

\begin{minipage}{0.26\textwidth}
\begin{prooftree}
\AxiomC{$\varphi$}
\RightLabel{${\forall}^2 I$}
\UnaryInfC{$\forall X^n\, \varphi$}
\end{prooftree}
\end{minipage} &
\begin{minipage}{0.30\textwidth}
\begin{prooftree}
\AxiomC{$\forall X^n\, \varphi$}
\RightLabel{${\forall}^2 E$}
\UnaryInfC{$\varphi[T^n/X^n]$}
\end{prooftree}
\end{minipage} &
\begin{minipage}{0.26\textwidth}
\end{minipage} \\[3em]

\begin{minipage}{0.26\textwidth}
\begin{prooftree}
\AxiomC{$\varphi[T^n/X^n]$}
\RightLabel{${\exists}^2 I$}
\UnaryInfC{$\exists X^n\, \varphi$}
\end{prooftree}
\end{minipage} &
\begin{minipage}{0.30\textwidth}
\begin{prooftree}
\AxiomC{$\exists X^n\, \varphi$}
\AxiomC{$[\varphi]$}
\noLine
\UnaryInfC{$\vdots$}
\noLine
\UnaryInfC{$\chi$}
\RightLabel{${\exists}^2 E$}
\BinaryInfC{$\chi$}
\end{prooftree}
\end{minipage} &
\begin{minipage}{0.26\textwidth}
\end{minipage} \\[2em]

\end{tabular}

\medskip

\begin{minipage}{0.8\textwidth}
\footnotesize
In ${\forall}^2 I$ and ${\exists}^2 E$, the predicate
variable $X^n$ may not occur free in any undischarged assumption. In $\exists^2 E$, $X^n$ may also not occur free in $C$.
\end{minipage}
\end{table}

For simplicity, we will translate second-order logic into first-order logic
with $\mathrm{Var}\cup\bigcup_{n\geq 0}\mathrm{Var}_n$ as its pool of
variables.

\smallskip

Given a signature $L$, the target signature $L^\dagger$
has:
\begin{itemize}
    \item all function symbols from $L$;
    \item a fresh constant $c_R$ for each relation symbol $R$ from $L$;
    \item a fresh constant $c_\bot$;
    \item an $(n+1)$-ary relation symbol $Ap_n$ for each $n\geq 0$;
    \item unary relation symbols $V$ and $U_n$ for each $n\geq 0$.
\end{itemize}

\noindent The translation $\varphi\mapsto\varphi^\dagger$ of $\mathrm{SOL}$-formulas
over $L$ into $L^\dagger$-formulas is defined as follows:
\begin{enumerate}
    \item $\bot^\dagger := Ap_0(c_\bot)$;
    \item $(X^n(t_1,\ldots,t_n))^\dagger := Ap_n(X,t_1,\ldots,t_n)$;
    \item $(R(t_1,\ldots,t_n))^\dagger := Ap_n(c_R,t_1,\ldots,t_n)$;
    \item $(\varphi\star\psi)^\dagger := \varphi^\dagger\star\psi^\dagger$
          for $\star\in\{\land,\lor,\to\}$;
    \item $(\forall x\varphi)^\dagger
          := \forall x(V(x)\to\varphi^\dagger)$
          and $(\exists x\varphi)^\dagger
          := \exists x(V(x)\land\varphi^\dagger)$;
    \item $(\forall X^n\,\varphi)^\dagger
          := \forall X(U_n(X)\to\varphi^\dagger)$
          and $(\exists X^n\,\varphi)^\dagger
          := \exists X(U_n(X)\land\varphi^\dagger)$.
\end{enumerate}

\noindent We let $G$ be (the universal closure of) the following axioms:
\begin{itemize}
    \item $\exists x\,V(x)$ and $\exists x\,U_n(x)$;
    \item $\neg\left(V(x) \land U_n(x)\right)$; 
    \item $\neg\left(U_n(x) \land U_m(x)\right)$ for $m\neq n$;
    \item $Ap_n(x,y_1,\ldots,y_n)\to U_n(x)\land
          \bigwedge_{k=1}^n V(y_k)$;
    \item $V(c)$ for each constant $c$ from $L$;
    \item $U_n(c_R)$ for each $n$-ary relation symbol $R$ from $L$;
    \item $U_0(c_\bot)$ and $\neg Ap_0(c_\bot)$;
    \item $V(x_1)\land\ldots\land V(x_n)\to V(f(x_1,\ldots,x_n))$
          for each $n$-ary function symbol $f$ from $L$;
    \item $\psi^\dagger$ for each instance of the comprehension schema.
\end{itemize}

\begin{theorem}\label{SOLtoFOL}
If $\Gamma\cup\{\varphi\}$ is a set of $\mathrm{SOL}$-sentences of $L$,
then
$$
\Gamma\vdash_{\mathrm{SOL}}\varphi
\quad\text{if and only if}\quad
\Gamma^\dagger, G\vdash_{\mathrm{FOL}}\varphi^\dagger.
$$
\end{theorem}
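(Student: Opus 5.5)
The plan is to factor the translation $(\cdot)^\dagger$ through many-sorted logic and apply Theorem~\ref{Optaxthrm}.

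First I set up a many-sorted signature $\mathcal{L}_2$. Its sorts are $\mathrm{v}$ (individuals) and $n$ for each $n\geq 0$ ($n$-ary predicates). Its function symbols are those of $L$, made strict with rank $(\mathrm{v},\dots,\mathrm{v},\mathrm{v})$. It has constants $c_R$ of sort $n$ for each $n$-ary $R$, and $c_\bot$ of sort $0$. It has strict relation symbols $Ap_n$ with $\mathrm{rank}(Ap_n)=(n,\mathrm{v},\dots,\mathrm{v})$. I identify the sort predicates by $Q_\mathrm{v}:=V$ and $Q_n:=U_n$.

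Next I define a translation $\varphi\mapsto\varphi^\sharp$ from $\mathrm{SOL}$ into $\mathcal{L}_2$. It sends individual variables to sort $\mathrm{v}$ and $X^n$ to sort $n$. It sets $\bot^\sharp:=Ap_0(c_\bot)$, $X^n(\vec t)^\sharp:=Ap_n(X,\vec t)$, $R(\vec t)^\sharp:=Ap_n(c_R,\vec t)$, and commutes with connectives and quantifiers. Then $\varphi^\dagger=(\varphi^\sharp)^*$ holds literally.

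Let $\mathrm{Th}$ consist of $\neg Ap_0(c_\bot)$ together with the $\sharp$-images of the comprehension instances. Then $G$ is precisely $J^+_{\mathcal{L}_2}\cup\{\neg Ap_0(c_\bot)\}\cup\mathrm{Th}^*$. The second and third axioms of $G$ are the disjointness axioms of $J^+$. The $Ap_n$ axiom is the strictness axiom of $J^+$. The remaining axioms are $J$. Since there are infinitely many sorts, the disjunction axiom $\bigvee Q_\mi x$ is not required. Theorem~\ref{Optaxthrm} therefore gives
$$\Gamma^\dagger,G\vdash_\mathrm{FOL}\varphi^\dagger\quad\text{iff}\quad \Gamma^\sharp,\mathrm{Th}\vdash^{\mathcal{L}_2}_\mathrm{MSL}\varphi^\sharp.$$
One detail needs care here. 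Theorem~\ref{Optaxthrm} is stated for plain MSL with $J^+$, and $G$ contains no equality. So Theorem~\ref{Optaxthrm} is invoked with $\Gamma$ replaced by the set of sentences $\Gamma^\sharp\cup\mathrm{Th}$, and the universal closures of the comprehension translates are indeed sentences.

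It remains to show $\Gamma\vdash_\mathrm{SOL}\varphi$ iff $\Gamma^\sharp,\mathrm{Th}\vdash_\mathrm{MSL}\varphi^\sharp$. In the forward direction, I translate an $\mathrm{SOL}$ derivation step by step. Each $\bot E$ becomes $Ap_0(c_\bot)$ followed by $\neg Ap_0(c_\bot)$, which yields MSL $\bot$ and hence anything. For $RAA$, note that $\neg A$ translates to $A^\sharp\to Ap_0(c_\bot)$; this is interderivable with $\neg A^\sharp$ using $\neg Ap_0(c_\bot)$ and $\bot E$. The quantifier rules $\forall^2E$ and $\exists^2I$ with a term $T^n$ use $X\mapsto c_R$, or $c_\bot$ when $T=\bot$. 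Here I must check $(\varphi[T/X])^\sharp=\varphi^\sharp[c_T/X]$, which holds because $\bot^\sharp=Ap_0(c_\bot)$.

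For the converse, I map $\mathcal{L}_2$-formulas back to $\mathrm{SOL}$ with a map $(\cdot)^\flat$. It is defined by $Ap_n(X,\vec t)^\flat:=X(\vec t)$, $Ap_n(c_R,\vec t)^\flat:=R(\vec t)$, and $Ap_0(c_\bot)^\flat:=\bot$. Because the rank of $Ap_n$ is strict, the first argument of $Ap_n$ is always a variable or constant of sort $n$, so this is well defined. There is no equality in the images, but MSL includes equality axioms. I handle this by replacing each $s=t$ of sort $n$ with $\top$ and each $s=t$ of sort $\mathrm{v}$ with Leibniz equality over unary predicates, $\forall Z^1(Z(s)\to Z(t))$. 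The equality axioms then become derivable in $\mathrm{SOL}$ using comprehension; congruence for $Ap_n$ with equal predicate arguments is harmless, since sort-$n$ equalities become $\top$.

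The step I expect to be the main obstacle is exactly this treatment of sort-$n$ equality, and I am least sure of it. Replacing $X=Y$ by $\top$ makes the congruence axiom $X=Y\to(Ap(X,\vec x)\to Ap(Y,\vec x))$ underivable. I would first try to avoid the issue at the MSL level: show that equality of sort $n$ can be eliminated from MSL derivations of equality-free conclusions. One route is to reinterpret $X=Y$ as extensional equality $\forall\vec x(Ap_n(X,\vec x)\leftrightarrow Ap_n(Y,\vec x))$ and verify that every equality axiom becomes derivable. With sort-$n$ equality so interpreted, each axiom $\psi^\flat$ is $\mathrm{SOL}$-derivable, $(\varphi^\sharp)^\flat=\varphi$, and substitution commutes with $\flat$. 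An induction on derivations then finishes the proof.
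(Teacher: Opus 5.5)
Your proposal is correct and is essentially the paper's proof: the same factorization through a many-sorted signature with strict $Ap_n$ of rank $(n,\mathrm{v},\dots,\mathrm{v})$, the identification of $G$ with $\mathrm{Th}^*\cup J^+$ so that Theorem~\ref{Optaxthrm} applies, and a back-translation sending individual equality to Leibniz equality and predicate equality to extensional equality. Drop the interim idea of mapping sort-$n$ equalities to $\top$ (your remark that $Ap_n$-congruence is then harmless is false, as you later notice), and let $(\cdot)^\flat$ send $X=Y$ directly to $\forall\vec x\,(X(\vec x)\leftrightarrow Y(\vec x))$, as the paper does; congruence for $Ap_n$ then follows in $\mathrm{SOL}$ from this together with Leibniz equality instantiated via comprehension.
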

\begin{proof}
We decompose $(\cdot)^\dagger$ into two steps by first giving a translation $(\cdot)^\bullet$ into $\mathrm{MSL}$ . We define the many-sorted signature $\mathcal{L}^\bullet$ with the sorts $\{\frak{i}\}\cup \mathbb{N}$ where $\frak{i}$ is the sort for individuals and $n\in \mathbb{N}$ is the sort for $n$-ary predicates. We take the first-order variables as the variables of sort $\mathfrak{i}$ and the $n$-ary predicate variables as the variables of sort $n$.

We include the function symbols from $L$ and assign each $n$-ary function symbol the rank $(\frak{i},\dots,\frak{i},\frak{i})$. We include the constant symbols $c_R$ with sort $n$ for each $n$-ary relation symbol $R$, and the constant symbol $c_\bot$ with sort 0. Finally, for each $n$, we include the $(n+1)$-ary relation symbol $Ap_n$ with $\mathrm{rank}(Ap_n)=(n,\frak{i},\dots,\frak{i} ).$ We now define $(\cdot)^\bullet$ from $\mathrm{SOL}$-formulas to $\mathcal{L}^\bullet$-formulas by:
\begin{enumerate}
    \item $\bot^\bullet:=Ap_0(c_\bot)$;
    \item $(X^n(t_1,\ldots,t_n))^\bullet:=Ap_n(X^n,t_1,\ldots,t_n)$;
    \item $(R(t_1,\ldots,t_n))^\bullet:=Ap_n(c_R,t_1,\ldots,t_n)$;
    \item $(\varphi\star\psi)^\bullet:=\varphi^\bullet\star\psi^\bullet$ for $\star\in\{\land,\lor,\to\}$;
    \item $(\forall x\varphi)^\bullet:=\forall x\varphi^\bullet$ and $(\exists x\varphi)^\bullet:=\exists x\varphi^\bullet$;
    \item $(\forall X^n\,\varphi)^\bullet:=\forall X^n\,\varphi^\bullet$ and $(\exists X^n\,\varphi)^\bullet:=\exists X^n\,\varphi^\bullet$.
\end{enumerate}
We take $H$ to be the set of $\mathcal{L}^\bullet$-sentences consisting of $\neg Ap_0(c_\bot)$ together with $\psi^\bullet$ for each instance of the comprehension schema. 

\begin{claim}\label{SOLtoMSL}
$\Gamma\vdash_{\mathrm{SOL}}\varphi$ if and only if $\Gamma^\bullet, H\vdash_{\mathrm{MSL}}\varphi^\bullet.$
\end{claim}

\begin{claimproof}[Proof of Claim] We only show the right-to-left direction. The only sticking point is that $\mathrm{SOL}$ does not include equality as a primitive, while $\mathrm{MSL}$ does. So on the $\mathrm{SOL}$ side, we use \emph{Leibniz equality} for individuals:
$$t_1\sim t_2\;:=\;\forall X^1\big(X(t_1)\leftrightarrow X(t_2)\big),$$
and for predicates, we use \emph{extensionality}:
$$T_1^n\sim T_2^n\;:=\;\forall x_1\ldots\forall x_n\big(T_1^n(x_1,\ldots,x_n)\leftrightarrow T_2^n(x_1,\ldots,x_n)\big).$$
We can now transform any derivation of $\Gamma^\bullet, H\vdash_{\mathrm{MSL}}\varphi^\bullet$ by replacing each $Ap_n(c_R,t_1,\ldots,t_n)$ with $R^n(t_1,\ldots,t_n)$, each $Ap_n(X^n,t_1,\ldots,t_n)$ with $X^n(t_1,\ldots,t_n)$, each $Ap_0(c_\bot)$ with $\bot$, and each instance of $=$ with $\sim $. Then, $\psi^\bullet$ is mapped back to $\psi$ for each second-order formula $\psi$. Moreover, it is routine to check that the image of each equality axiom is derivable in $\mathrm{SOL}$. 
    
\end{claimproof}

Finally, we identify $V$ with $Q_{\mathfrak{i}}$ and $U_n$ with $Q_n$. This gives $(\psi^\bullet)^*=\psi^\dagger$ for each second-order $\psi$ and $H^*\cup J^+_{\mathcal{L}^\bullet}=G$. So by Theorem~\ref{Optaxthrm},
$$
\Gamma^\bullet, H\vdash_{\mathrm{MSL}}^{\mathcal{L}^\bullet}\varphi^\bullet
\quad\text{iff}\quad
\Gamma^\dagger, G\vdash_{\mathrm{FOL}}\varphi^\dagger.
$$

\end{proof}

\section{Simple proof of Herbrand's claim}\label{Repairing Herbrand's proof}
In this section, we only consider equality-free formulas with strict relation and function symbols. Accordingly, we write $\mathrm{rank}(R)=(\mi_1,...,\mi_n)$ and $\mathrm{rank}(f)=(\mi_1,...,\mi_n,\mi)$ instead of $(\mi_1,...,\mi_n)\in \mathrm{rank}(R)$ and $(\mi_1,...,\mi_n,\mi)\in \mathrm{rank}(f)$ for $n$-ary relation and function symbols, respectively. We will also use without mention that any derivation of $\Pi\vdash_\mathrm{FOL}A$ can be effectively transformed into an equality-free one (see, e.g., \cite{Takeuti87}, where this is established via cut-elimination). 

\smallskip

As mentioned in the introduction, this is the setting Herbrand was working in. He gave the following argument of how one gets $\Gamma\vdash_\mathrm{MSL}\varphi$ from $\Gamma^*,J\vdash_{\mathrm{FOL}}\varphi^*$: Take a derivation of $\Gamma^*,J\vdash_{\mathrm{FOL}}\varphi^*$ and replace every occurrence of $Q_\mi t$ by $\top$. This transforms $\varphi^*$ into a sentence equivalent to $\varphi$, $\Gamma^*$ into sentences equivalent to $\Gamma$, and each axiom in $J$ into a tautology. This gives $\Gamma \vdash_\mathrm{FOL} \varphi$. However, as Schmidt pointed out, it is not obvious that $\Gamma \vdash_\mathrm{MSL} \varphi$, since the derivation may pass through formulas that are not well-sorted. 

However, we can use insights from \cite{NourRaffalli2003} to obtain a many-sorted derivation from an unsorted one. First, if we ignore function symbols for a moment and consider a formula like $R(x,x)$ where $\mathrm{rank}(R)=(\mi,\mj)$ with $\mi\neq \mj$. Then we can imagine the placements of the instances of $x$ telling us what role it should play: the first instance should be understood as a variable with sort $\mi$, while the second should be understood as having $\mj$ as its sort. We can make this precise by introducing variables $x^{\langle \mi\rangle}$ and $x^{\langle \mj\rangle}$ with the sorts $\mi$ and $\mj$, respectively. Now, the ill-sorted $R(x,x)$ becomes the well-sorted $R(x^{\langle \mi\rangle},x^{\langle \mj\rangle})$. Similarly, $\exists x R(x,x)$ becomes $\exists x^{\langle \mi\rangle}\exists x^{\langle \mj\rangle} R(x^{\langle \mi\rangle},x^{\langle \mj\rangle})$. 

The tricky part is dealing with complex terms, as in $R(f(x), f(x))$ where $\mathrm{rank}(f)=(\mk,\ml)$. Here, the formula is telling us that the first instance of the term $f(x)$ should have the sort $\mi$, while the second should have the sort $\mj$. This time we introduce the function symbols $f^{\langle \mi\rangle}$ and $f^{\langle \mj\rangle}$ with $\mathrm{rank}(f^{\langle \mi\rangle})=(\mk,\mi)$ and $\mathrm{rank}(f^{\langle \mj\rangle})=(\mk,\mj)$, respectively. Then $R(f(x), f(x))$ becomes $R(f^{\langle \mi\rangle}(x^{\langle \mk\rangle}), f^{\langle \mj\rangle}(x^{\langle \mk\rangle}))$ as both $f^{\langle \mi\rangle}$ and $f^{\langle \mj\rangle}$ expect inputs with sort $\mk$. Whenever a variable or function symbol already has the required sort or rank, we can leave it unchanged. 

\begin{theorem}\label{HebLem}
    If $\Gamma\cup\{\varphi\}$ is a set of equality-free formulas with only strict symbols, then
    $$\Gamma\vdash_{\mathrm{MSL}}\varphi  \quad\text{iff}\quad\Gamma\vdash_{\mathrm{FOL}}\varphi.$$
\end{theorem}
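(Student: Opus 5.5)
The left-to-right direction is immediate: a many-sorted derivation is an unsorted one once the sort superscripts are forgotten. The quantifier rules of Table~\ref{MSLrules} are instances of those in Table~\ref{FOLrules}, and no equality axioms are involved. For the converse, I would take an equality-free derivation $\mathcal{D}$ of $\Gamma\vdash_\mathrm{FOL}\varphi$ and construct a \emph{role-annotating} reverse coding $(\cdot)^\diamond$, following the informal description above, that turns every unsorted formula into a well-sorted formula over an extended signature $\mathcal{L}'\geq\mathcal{L}$. The aim is to show $\Pi\vdash_\mathrm{FOL}A$ implies $\Pi^\diamond\vdash^{\mathcal{L}'}_\mathrm{MSL}A^\diamond$, with $\psi^\diamond$ equal to $\psi$, up to renaming of bound variables, for well-sorted $\psi$. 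Proposition~\ref{ConLan} then brings the derivation back to $\mathcal{L}$.

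\textbf{Construction.} I would let $\mathcal{L}'$ have the same sorts and, for each $n$-ary $f$ with $\mathrm{rank}(f)=(\mi_1,\dots,\mi_n,\mi)$ and each sort $\mj$, a strict symbol $f^{\langle\mj\rangle}$ of rank $(\mi_1,\dots,\mi_n,\mj)$, with $f^{\langle\mi\rangle}:=f$. For each variable $x$ and sort $\mj$, I would fix a variable $x^{\langle\mj\rangle}\in\mathrm{Var}_\mj$, with $x^{\langle\mi\rangle}=x$ when $x\in\mathrm{Var}_\mi$, choosing the assignment injectively. Next I would define $t^{\langle\mj\rangle}$, the term $t$ read at sort $\mj$, by recursion: $x\mapsto x^{\langle\mj\rangle}$ and $f(t_1,\dots,t_n)\mapsto f^{\langle\mj\rangle}(t_1^{\langle\mi_1\rangle},\dots,t_n^{\langle\mi_n\rangle})$. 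An easy induction gives $t^{\langle\mj\rangle}=t$ when $t$ is well sorted of sort $\mj$. Then I would set $R(t_1,\dots,t_n)^\diamond:=R(t_1^{\langle\mi_1\rangle},\dots,t_n^{\langle\mi_n\rangle})$, let $(\cdot)^\diamond$ commute with the connectives, and treat the quantifiers as follows. Since only finitely many sorts occur in $\mathcal{D}$, we may assume $\mathrm{Sort}$ is finite, and put
\[(\forall x A)^\diamond:=\forall x^{\langle\mi_1\rangle}\cdots\forall x^{\langle\mi_m\rangle}A^\diamond,\qquad (\exists x A)^\diamond:=\exists x^{\langle\mi_1\rangle}\cdots\exists x^{\langle\mi_m\rangle}A^\diamond,\]
quantifying over all sorts. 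Vacuous quantifiers can be dropped up to provable equivalence, and this dropping uses nonemptiness of sorts, which MSL has built in. Hence for well-sorted $\psi$ we get $\vdash_\mathrm{MSL}\psi\leftrightarrow\psi^\diamond$, and this suffices to recover $\Gamma\vdash\varphi$ at the end.

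\textbf{Substitution lemma.} The key fact is
\[(A[t/x])^\diamond = A^\diamond[t^{\langle\mi_1\rangle}/x^{\langle\mi_1\rangle},\dots,t^{\langle\mi_m\rangle}/x^{\langle\mi_m\rangle}].\]
I would prove it first for terms, $(s[t/x])^{\langle\mj\rangle}=s^{\langle\mj\rangle}[\ldots]$, by induction on $s$, and then lift it to formulas. It is exactly the reason for introducing the symbols $f^{\langle\mj\rangle}$: a substituted term must be reread at every sort at which $x$ occurs. With the lemma in hand, the induction over derivations is routine. The case ${\forall}E$ is handled by $m$ consecutive MSL ${\forall}E$ steps instantiating $x^{\langle\mi_k\rangle}$ with $t^{\langle\mi_k\rangle}$. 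The cases ${\exists}I$ and ${\exists}E$ are dual, and ${\forall}I$ is $m$ steps of ${\forall}I$, where the eigenvariable condition transfers because $x$ free in $\Pi$ iff some $x^{\langle\mj\rangle}$ is free in $\Pi^\diamond$. The propositional rules, including $RAA$, translate verbatim.

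\textbf{Main obstacle.} I expect the main obstacle to be the variable bookkeeping. The map $x\mapsto x^{\langle\mj\rangle}$ has to be injective and compatible with freeness, so that a term free for $x$ in $A$ gives terms free for the $x^{\langle\mj\rangle}$ in $A^\diamond$. The definition must also avoid capture in the substitution lemma, and $\psi^\diamond$ must agree with $\psi$ rather than merely be equivalent to it. I would handle this by fixing a global injective pairing $(x,\mj)\mapsto x^{\langle\mj\rangle}$ in advance, renaming $\mathcal{D}$ so that its variables lie in a reserved subset of $\mathrm{Var}$, and proving the identity only up to the provable equivalence described above.
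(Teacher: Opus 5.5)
Your proposal is correct and is essentially the paper's proof. It uses the same role-annotated variables $x^{\langle\mj\rangle}$ and function symbols $f^{\langle\mj\rangle}$, the same substitution claim, the same replacement of quantifier rules by multi-quantifier rules, and the same final appeal to Proposition~\ref{ConLan}. The only real difference is that you quantify over all (finitely many) sorts, which costs you the finite-sort reduction and leaves you with only $\vdash\psi\leftrightarrow\psi^\diamond$; the paper quantifies only over $\mathrm{sort}(x)$ and the sorts $\mj$ with $x^{\langle\mj\rangle}$ free in $A^\diamond$, and so gets $\psi^\diamond=\psi$ exactly. One small slip: ${\forall}E$ and ${\exists}I$ need the derived simultaneous-substitution rules (via fresh variables), not literally consecutive single steps, because $t$ may contain $x$, and then $t^{\langle\mi_1\rangle}$ would be captured by the remaining quantifiers $\forall x^{\langle\mi_k\rangle}$.
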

\begin{proof} Fix a derivation $\mathcal{D}$ of $\Gamma\vdash_\mathrm{FOL}\varphi$. For each variable $x^\mi$ appearing in $\mathcal{D}$ and each sort $\mj \neq \mi$, we pick a fresh variable $x^{\langle \mj\rangle}\in\mathrm{Var}_\mj$, and we let $x^{\langle \mi\rangle} := x$. If $\mathrm{rank}(f)=(\mi_1,\ldots,\mi_n,\mi)$ and $\mj \neq \mi$, then we introduce a new function symbol $f^{\langle \mj\rangle}$ with $\mathrm{rank}(f^{\langle \mj\rangle})=(\mi_1,\ldots,\mi_n,\mj)$, and let $f^{\langle \mi\rangle} := f$. 
We extend this to complex terms by letting $$(f(t_1,\ldots,t_n))^{\langle \mj\rangle} \;:=\; f^{\langle \mj\rangle}(t_1^{\langle \mi_1\rangle},\ldots,t_n^{\langle \mi_n\rangle}).$$  We take $\mathcal{L}'$ to be the signature extended with the new function symbols. Notice that $\mathrm{sort}(t^{\langle \mi\rangle})=\mi$ for any term $t$ and $\mi\in\mathrm{Sort}$. 

We recursively define $A^\diamond$ as follows:
\begin{enumerate}
    \item $\bot^\diamond := \bot$;
    \item $(R(t_1,\ldots,t_n))^\diamond := R(t_1^{\langle \mi_1\rangle},\ldots,t_n^{\langle \mi_n\rangle})$ where $\mathrm{rank}(R) = (\mi_1,\ldots,\mi_n)$;
    \item $(A \star B)^\diamond := A^\diamond \star B^\diamond$ for $\star \in \{\land,\lor,\to\}$;
    \item $(\exists x A)^\diamond := \exists x^{\langle \mi_1\rangle} \cdots\exists x^{\langle \mi_m\rangle} A^\diamond$ where $\mi_1=\mathrm{sort}(x)$ and $\mi_2,\ldots, \mi_m$ are the sorts $\mj\neq \mi_1$ for which $x^{\langle \mj\rangle}\in \mathrm{FV}(A^\diamond)$;
    \item $(\forall x A)^\diamond := \forall x^{\langle \mi_1\rangle}\cdots\forall x^{\langle \mi_m\rangle} A^\diamond$ where $\mi_1=\mathrm{sort}(x)$ and $\mi_2,\ldots, \mi_m$ are the sorts $\mj\neq \mi_1$ for which $x^{\langle \mj\rangle}\in \mathrm{FV}(A^\diamond)$.
\end{enumerate}
Clearly, $\psi^\diamond = \psi$ for all $\mathcal{L}$-formulas. We will also need the following claim. Its proof is a straightforward but lengthy induction placed in  Appendix~\ref{app:FormSubst}.

\begin{restatable}{claim}{FormSubst}\label{FormSubst}
    Let $A$ be a formula, $t$ a term free for $x$ in $A$, and suppose every free $x^{\langle \mi\rangle}$ in $A^\diamond$ is among $x^{\langle \mi_1\rangle}, \ldots, x^{\langle \mi_m\rangle}$. Then each $t^{\langle \mi_k\rangle}$ is free for $x^{\langle \mi_k\rangle}$ in $A^\diamond$ and
    $(A[t/x])^\diamond = A^\diamond[t^{\langle \mi_1\rangle}/x^{\langle \mi_1\rangle}, \ldots, t^{\langle \mi_m\rangle}/x^{\langle \mi_m\rangle}].$
\end{restatable}

We can now get a derivation of $\Gamma\vdash^{\mathcal{L}'}_\mathrm{MSL}\varphi$ from $\mathcal{D}$ by applying $(\cdot)^\diamond$ to each formula and replacing each quantifier rule with one of the following derivable rules:

\begin{center}
\footnotesize
\begin{tabular}{cc}

\begin{minipage}{0.40\textwidth}
\begin{prooftree}
\AxiomC{$\psi$}
\RightLabel{$\forall I_n$}
\UnaryInfC{$\forall x_1^{\mi_1}\cdots\forall x_n^{\mi_n}\, \psi$}
\end{prooftree}
\end{minipage} &
\begin{minipage}{0.40\textwidth}
\begin{prooftree}
\AxiomC{$\forall x_1^{\mi_1}\cdots\forall x_n^{\mi_n}\, \psi$}
\RightLabel{$\forall E_n$}
\UnaryInfC{$\psi[t_1^{\mi_1}/x_1^{\mi_1},\ldots,t_n^{\mi_n}/x_n^{\mi_n}]$}
\end{prooftree}
\end{minipage} \\[3em]

\begin{minipage}{0.40\textwidth}
\begin{prooftree}
\AxiomC{$\psi[t_1^{\mi_1}/x_1^{\mi_1},\ldots,t_n^{\mi_n}/x_n^{\mi_n}]$}
\RightLabel{$\exists I_n$}
\UnaryInfC{$\exists x_1^{\mi_1}\cdots\exists x_n^{\mi_n}\, \psi$}
\end{prooftree}
\end{minipage} &
\begin{minipage}{0.40\textwidth}
\begin{prooftree}
\AxiomC{$\exists x_1^{\mi_1}\cdots\exists x_n^{\mi_n}\, \psi$}
\AxiomC{$[\psi]$}
\noLine
\UnaryInfC{$\vdots$}
\noLine
\UnaryInfC{$\chi$}
\RightLabel{$\exists E_n$}
\BinaryInfC{$\chi$}
\end{prooftree}
\end{minipage} \\[2em]

\end{tabular}
\end{center}

By Proposition \ref{ConLan}, we now get $\Gamma\vdash_\mathrm{MSL}\varphi$.

\end{proof}

\bibliographystyle{alpha}
\bibliography{references}

@phdthesis{THESE_1930__110__1_0,
     author = {Herbrand, Jacques},
     title = {Recherches sur la th\'eorie de la d\'emonstration},
     series = {Th\`eses de l'entre-deux-guerres},
     year = {1930},
     number = {110},
     language = {fr},
     url = {https://www.numdam.org/item/THESE_1930__110__1_0/}
}

@article{Schmidt1938,
  author    = {Schmidt, Arnold},
  title     = {Über deduktive {T}heorien mit mehreren {S}orten von {G}runddingen},
  journal   = {Mathematische Annalen},
  volume    = {115},
  number    = {1},
  pages     = {485--506},
  year      = {1938},
  doi       = {10.1007/BF01448954},
}

@article{Schmidt1951,
  author    = {Schmidt, Arnold},
  title     = {Die {Z}ul\"assigkeit der {B}ehandlung mehrsortiger {T}heorien mittels der \"ublichen einsortigen {P}r\"adikatenlogik},
  journal   = {Mathematische Annalen},
  volume    = {123},
  number    = {1},
  pages     = {187--200},
  year      = {1951},
  doi       = {10.1007/BF02054948},
}

@article{Wang1952,
  author    = {Wang, Hao},
  title     = {Logic of Many-Sorted Theories},
  journal   = {Journal of Symbolic Logic},
  volume    = {17},
  number    = {2},
  pages     = {105--116},
  year      = {1952},
  doi       = {10.2307/2266241},
}

@article{Gilmore1958,
     author = {Gilmore, Paul. C.},
     title = {An addition to {\textquotedblleft}logic of many-sorted theories{\textquotedblright}},
     journal = {Compositio Mathematica},
     pages = {277--281},
     year = {1958},
     publisher = {Kraus Reprint},
     volume = {13},
     mrnumber = {106176},
     zbl = {0149.24502},
     language = {en},
     url = {https://www.numdam.org/item/CM_1956-1958__13__277_0/}
}

@article{NourRaffalli2003,
  author    = {Nour, Karim and Raffalli, Christophe},
  title     = {Simple Proof of the Completeness Theorem for Second-Order Classical and Intuitionistic Logic by Reduction to First-Order Mono-Sorted Logic},
  journal   = {Theoretical Computer Science},
  volume    = {308},
  number    = {1},
  pages     = {227--237},
  year      = {2003},
  doi       = {10.1016/S0304-3975(02)00731-4},
}

@incollection{Herbrand1971,
  author    = {Herbrand, Jacques},
  title     = {Investigations in Proof Theory},
  booktitle = {Logical Writings},
  editor    = {Goldfarb, Warren D.},
  publisher = {D. Reidel},
  address   = {Dordrecht},
  pages     = {44--202},
  year      = {1971},
  doi       = {10.1007/978-94-010-3072-4},
  isbn      = {978-90-277-0176-3},
}

@incollection{ManzanoAranda2022,
  author    = {Manzano, Mar{\'\i}a and Aranda, V{\'\i}ctor},
  title     = {Many-Sorted Logic},
  booktitle = {The {Stanford} Encyclopedia of Philosophy},
  editor    = {Zalta, Edward N. and Nodelman, Uri},
  year      = {2022},
  edition   = {Winter 2022},
  publisher = {Metaphysics Research Lab, Stanford University},
  url       = {https://plato.stanford.edu/archives/win2022/entries/logic-many-sorted/},
}

@book{vanDalen1980,
  author    = {van Dalen, Dirk},
  title     = {Logic and Structure},
  series    = {Universitext},
  publisher = {Springer-Verlag},
  address   = {Berlin},
  year      = {1980},
  isbn      = {3-540-09893-3}
}

@book{vanDalen1994,
  author    = {van Dalen, Dirk},
  title     = {Logic and Structure},
  edition   = {3rd},
  series    = {Universitext},
  publisher = {Springer-Verlag},
  address   = {Berlin},
  year      = {1994},
  isbn      = {978-3-540-57839-0}
}

@book{Takeuti87,
   title =     {Proof Theory},
   author =    {Gaisi Takeuti},
   publisher = {North-Holland},
   isbn =      {0444104925; 9780444104922; 0720422000; 9780720422009; 0720422779; 9780720422771},
   year =      {1987},
   series =    {Studies in Logic and the Foundations of Mathematics 81},
   edition =   {2nd},
   }

@article{GOGUEN1992217,
title = {Order-sorted algebra I: equational deduction for multiple inheritance, overloading, exceptions and partial operations},
journal = {Theoretical Computer Science},
volume = {105},
number = {2},
pages = {217-273},
year = {1992},
issn = {0304-3975},
doi = {https://doi.org/10.1016/0304-3975(92)90302-V},
url = {https://www.sciencedirect.com/science/article/pii/030439759290302V},
author = {Joseph A. Goguen and José Meseguer}
}

\appendix 

\section{The forward direction}\label{Forwarddirection}

In this appendix, we prove the left-to-right direction of the standard translation. We will use without further mention that, whenever $t^\mi$ is free for $x^\mi$ in $\psi$,
$$
(\psi[t^\mi/x^\mi])^*=\psi^*[t/x].
$$
Moreover, if $t^\mi$ is an $\mathcal{L}$-term, then
$$
J,\{Q_\mi x^\mi:x^\mi\in\mathrm{FV}(t)\}\vdash_{\mathrm{FOL}}Q_\mi t.
$$

\begin{lemma}\label{RemoveQAssumptions}
Let $\Gamma\cup\{\varphi\}$ be a set of $\mathcal{L}$-formulas, and let $S\subseteq \mathrm{Var}_{\mathcal{L}}$. Then
$$
\Gamma^*,J,\{Q_\mi x^\mi:x^\mi\in S\}\vdash_{\mathrm{FOL}}\varphi^*
\quad\text{implies}\quad
\Gamma^*,J,\{Q_\mi x^\mi:x^\mi\in \mathrm{FV}(\Gamma,\varphi)\}\vdash_{\mathrm{FOL}}\varphi^*.
$$
\end{lemma}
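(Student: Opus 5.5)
The plan is to discharge, one at a time, the assumptions $Q_\mi x^\mi$ with $x^\mi\in S\setminus \mathrm{FV}(\Gamma,\varphi)$. Any FOL derivation uses only finitely many assumptions, so only finitely many such $Q_\mi x^\mi$ actually occur. It therefore suffices to prove the one-step claim and then iterate. The one-step claim reads: if $x^\mi\notin\mathrm{FV}(\Gamma,\varphi)$ and
$$\Gamma^*,J,\Delta,Q_\mi x\vdash_{\mathrm{FOL}}\varphi^*,$$
where $\Delta$ is a set of assumptions $Q_\mk y$ with $y\neq x$, then $\Gamma^*,J,\Delta\vdash_{\mathrm{FOL}}\varphi^*$.

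To prove the one-step claim, apply ${\to}I$ to get $\Gamma^*,J,\Delta\vdash Q_\mi x\to\varphi^*$. We need that $x$ is not free in any remaining undischarged assumption. Three observations give this:
\begin{itemize}
\item $J$ consists of sentences (universal closures).
\item Translation does not change free variables, so $\mathrm{FV}(\Gamma^*)=\mathrm{FV}(\Gamma)$ and $\mathrm{FV}(\varphi^*)=\mathrm{FV}(\varphi)$; hence $x$ does not occur in them.
\item The members of $\Delta$ concern variables other than $x$.
\end{itemize}
Then ${\forall}I$ gives $\forall x(Q_\mi x\to\varphi^*)$. Next use $\exists x\,Q_\mi x\in J$ together with ${\exists}E$. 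Assume $Q_\mi x$ and derive $\varphi^*$ from $\forall x(Q_\mi x\to\varphi^*)$ by ${\forall}E$ with $t:=x$ followed by ${\to}E$. Then discharge. The ${\exists}E$ side condition holds because $x$ is not free in $\varphi^*$ or in the other open assumptions. The same facts as above justify this.

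One subtlety concerns assumptions in $S$ that mention the same variable with a different sort predicate. The set $\{Q_\mi x^\mi : x^\mi\in S\}$ attaches to each variable only the predicate of its own sort, since the sets $\mathrm{Var}_\mi$ are disjoint. So each $x$ carries exactly one $Q$-assumption, and removing it leaves no other assumption mentioning $x$. Variables of $S$ lying in $\mathrm{FV}(\Gamma,\varphi)$ are kept, which gives the target set.

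The only delicate point is this eigenvariable bookkeeping: checking that $x$ is free in no open assumption other than $Q_\mi x$. If $S$ is infinite, induct on the finite set of removable assumptions actually used. No renaming is needed, since we only quantify over $x$ itself.
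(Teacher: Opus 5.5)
Your proof is correct and follows the paper's argument. Each of the finitely many used assumptions $Q_\mi x^\mi$ with $x^\mi\notin\mathrm{FV}(\Gamma,\varphi)$ is discharged by ${\exists}E$ against $\exists x\,Q_\mi x\in J$, with the eigenvariable conditions secured by the observations you list; the only difference is that your detour through ${\to}I$, ${\forall}I$, ${\forall}E$ and ${\to}E$ is unnecessary, since the paper uses the given derivation itself as the minor premise of ${\exists}E$ and discharges $Q_\mi x$ directly.
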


\begin{proof} By weakening, we may assume that $\mathrm{FV}(\Gamma,\varphi)\subseteq S$. 
Fix a derivation $\mathcal{D}$ of
$
\Gamma^*,J,\{Q_\mi x^\mi:x^\mi\in S\}\vdash_{\mathrm{FOL}}\varphi^*,
$ and let $S_0\subseteq S$ be the finite set of assumptions from $S$ used in $\mathcal{D}$. 

For $y^\mj\in S_0\setminus \mathrm{FV}(\Gamma,\varphi)$, we have $J\vdash \exists yQ_\mj y$, so we can apply $\exists E$ to get a derivation of 
$$
\Gamma^*,J,\{Q_\mi x^\mi:x^\mi\in S_0\}\setminus\{Q_\mj y^\mj\}\vdash_{\mathrm{FOL}}\varphi^*.
$$
We can repeat this process until we get a derivation of $\Gamma^*,J,\{Q_\mi x^\mi:x^\mi\in \mathrm{FV}(\Gamma,\varphi)\}\vdash_{\mathrm{FOL}}\varphi^*.$

\end{proof}

\begin{theorem}\label{ForwardDirectionFormulas}
If $\Gamma\cup\{\varphi\}$ is a set of $\mathcal{L}$-formulas, then
$$
\Gamma\vdash_{\mathrm{MSL}}^{\mathcal{L}}\varphi\quad\text{implies}\quad\Gamma^*,J,\{Q_\mi x^\mi:x^\mi\in \mathrm{FV}(\Gamma,\varphi)\}\vdash_{\mathrm{FOL}}\varphi^*.
$$
\end{theorem}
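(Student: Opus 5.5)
We argue by induction on the height of an $\mathrm{MSL}$-derivation $\mathcal{D}$ of $\varphi$ from $\Gamma$. The claim we actually prove is slightly stronger: for every subderivation with conclusion $\psi$ and open assumptions $\Delta$,
$$\Delta^*,J,\{Q_\mi x^\mi:x^\mi\in S\}\vdash_{\mathrm{FOL}}\psi^*,$$
where $S$ is the set of all variables occurring free anywhere in that subderivation. At the end, Lemma~\ref{RemoveQAssumptions} cuts $S$ down to $\mathrm{FV}(\Gamma,\varphi)$. Working with the larger set $S$ avoids bookkeeping when a rule's premise contains free variables that disappear in the conclusion, as in ${\to}E$ or ${\exists}E$. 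Lemma~\ref{RemoveQAssumptions} is available at every stage, so we may also apply it inside the induction whenever convenient.

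The base cases are as follows.
\begin{itemize}
\item An assumption $\psi\in\Delta$ gives $\psi^*$ immediately.
\item For an equality axiom, the translation of its universal closure $\forall x^\mi\ldots$ is $\forall x(Q_\mi x\to\ldots)$. This follows from the corresponding unsorted equality axiom by ${\forall}E$, weakening with ${\to}I$ on the vacuous $Q$-hypotheses, and ${\forall}I$.
\end{itemize}

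The propositional rules commute with $(\cdot)^*$, since $(\cdot)^*$ preserves $\land,\lor,\to,\bot$; $RAA$ also commutes because $(\neg\psi)^*=\neg\psi^*$. For each of these rules we apply the same rule to the translated premises. The $Q$-assumptions are never discharged, and the relevant variable sets only grow when we pass to the whole subderivation, so weakening handles them.

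The quantifier rules use the substitution identity $(\psi[t^\mi/x^\mi])^*=\psi^*[t/x]$ together with the fact that $J,\{Q_\mj y:y\in\mathrm{FV}(t)\}\vdash Q_\mi t$.
\begin{itemize}
\item \textbf{${\forall}E$:} from $\forall x(Q_\mi x\to\psi^*)$ we instantiate to $Q_\mi t\to\psi^*[t/x]$, derive $Q_\mi t$ from the $Q$-assumptions on $\mathrm{FV}(t)\subseteq S$, and apply ${\to}E$.
\item \textbf{${\exists}I$:} we form $Q_\mi t\land\psi^*[t/x]$ and apply ${\exists}I$.
\item \textbf{${\forall}I$:} the induction hypothesis gives $\Delta^*,J,\{Q\text{'s for }S\}\vdash\psi^*$ with $x^\mi\in S$ possibly. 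We discharge $Q_\mi x$ by ${\to}I$ (adding it first by weakening if needed) and then apply ${\forall}I$ on $x$. This is legal because $x$ is not free in $\Delta$ by the side condition and not free in $J$, since $J$ consists of sentences. The one possible trouble is another $Q_\mj x$ assumption, but variables are sorted, so the only $Q$-assumption mentioning $x$ is $Q_\mi x$.
\item \textbf{${\exists}E$:} from $\exists x(Q_\mi x\land\psi^*)$ and a derivation of $\chi^*$ from $\psi^*$ and $Q_\mi x$, we assume $Q_\mi x\land\psi^*$, extract both conjuncts, and discharge via ${\exists}E$. The eigenvariable condition holds for the same reason as before: $Q_\mi x$ is now discharged, and $x\notin\mathrm{FV}(\chi,\Delta)$.
\end{itemize}

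I expect the main obstacle to be exactly this handling of the $Q$-assumptions in the eigenvariable rules. The set $S$ must be chosen so that $Q_\mi x$ is available to discharge but no undischarged $Q$-assumption mentions the eigenvariable. This is why I index $S$ per subderivation and, where needed, use Lemma~\ref{RemoveQAssumptions} to eliminate assumptions $Q_\mj y$ for variables $y$ that no longer occur free. Those occur for instance when a term in ${\forall}E$ introduces variables that vanish, which $J\vdash\exists y\,Q_\mj y$ handles.

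Applying the strengthened claim to $\mathcal{D}$ and then Lemma~\ref{RemoveQAssumptions} gives $\Gamma^*,J,\{Q_\mi x^\mi:x^\mi\in\mathrm{FV}(\Gamma,\varphi)\}\vdash_{\mathrm{FOL}}\varphi^*$.
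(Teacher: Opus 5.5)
Your proposal is correct and is essentially the paper's proof. Both argue by induction on the derivation, simulating each rule. The ${\forall}I$ and ${\exists}E$ cases are handled by discharging $Q_\mi x$ exactly as in the paper, and Lemma~\ref{RemoveQAssumptions} absorbs superfluous $Q$-hypotheses.

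The only difference is bookkeeping. You carry the larger set $S$ of all free variables of the subderivation and trim at the end, whereas the paper keeps $\mathrm{FV}(\Delta,\psi)$ throughout. Your invariant is actually weaker, not stronger as you say, and that is what makes cases like ${\to}E$ pure weakening. Also, in ${\forall}E$ and ${\exists}I$ with $x$ not free in $\psi$, you do not have $\mathrm{FV}(t)\subseteq S$, so the lemma must be invoked there, as your final paragraph indicates.
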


\begin{proof}
Fix a derivation $\mathcal{D}$ of $\Gamma\vdash_\mathrm{MSL}\varphi$. We take as our induction hypothesis that for any set of $\mathcal{L}$-formulas $\Delta\cup\{\psi\}$, if there is a derivation of $\Delta\vdash_\mathrm{MSL}\psi$  of length strictly less than $\mathcal{D}$, then $\Delta^*,J,\{Q_\mi x^\mi:x^\mi\in \mathrm{FV}(\Delta,\psi)\}\vdash_{\mathrm{FOL}}\psi^*.$

We proceed by cases on what the last rule applied in $\mathcal{D}$ was. The cases for assumptions, equality axioms, and propositional rules are immediate. The quantifier rules then follow from the lemma above. We spell out the cases for $\exists I$ and $\exists E$ as the cases for $\forall I$ and $\forall E$ are similar.

\medskip

\noindent\textbf{Case $\exists I$:} 
Suppose that the last rule was $\exists I$. Then $\varphi$ is of the form $\exists y^\mj\psi$ and $\mathcal{D}$ ends with an application of $\exists I$ to a shorter derivation of $\Gamma\vdash_\mathrm{MSL}\psi[t^\mj/y^\mj]$. By the induction hypothesis,
$$\Gamma^*,J,\{Q_\mi x^\mi:x^\mi\in \mathrm{FV}(\Gamma,\psi[t/y])\}\vdash_{\mathrm{FOL}}\psi^*[t/y].$$
Since $\mathrm{FV}(t)\subseteq \mathrm{FV}(\Gamma,\psi[t/y])$, we have $J,\{Q_\mi x^\mi:x^\mi\in \mathrm{FV}(\Gamma,\psi[t/y])\}\vdash_\mathrm{FOL} Q_\mj t$. By ${\land}I$ and ${\exists}I$,
$$\Gamma^*,J,\{Q_\mi x^\mi:x^\mi\in \mathrm{FV}(\Gamma,\psi[t/y])\}\vdash_{\mathrm{FOL}}\exists y(Q_\mj y\land \psi^*)=\varphi^*.$$
By Lemma~\ref{RemoveQAssumptions},
$$\Gamma^*,J,\{Q_\mi x^\mi:x^\mi\in \mathrm{FV}(\Gamma,\varphi)\}\vdash_{\mathrm{FOL}}\varphi^*.$$
 
\medskip
 
\noindent\textbf{Case $\exists E$:}
Suppose that the last rule was $\exists E$. Then $\mathcal{D}$ ends with an application of $\exists E$ to shorter derivations of ${\Gamma_1\vdash_\mathrm{MSL}\exists y^\mj\psi}$ and $\Gamma_2,\psi\vdash_\mathrm{MSL}\varphi$, where $\Gamma_1\cup\Gamma_2\subseteq \Gamma$ and  $y^\mj$ does not occur free in $\Gamma_2$ or $\varphi$. By the induction hypothesis,
$$\Gamma_1^*,J,\{Q_\mi x^\mi:x^\mi\in \mathrm{FV}(\Gamma_1,\exists y^\mj\psi)\}\vdash_{\mathrm{FOL}}\exists y(Q_\mj y\land \psi^*)$$
and
$$\Gamma_2^*,\psi^*,J,\{Q_\mi x^\mi:x^\mi\in \mathrm{FV}(\Gamma_2,\psi,\varphi)\}\vdash_{\mathrm{FOL}}\varphi^*.$$
The latter gives
$$\Gamma_2^*,Q_\mj y\land\psi^*,J,\{Q_\mi x^\mi:x^\mi\in \mathrm{FV}(\Gamma_2,\psi,\varphi)\}\setminus \{Q_\mj y\}\vdash_{\mathrm{FOL}}\varphi^*.$$
We can now apply $\exists E$ on $\exists y(Q_\mj y\land \psi^*)$ to get
$$\Gamma^*,J,\{Q_\mi x^\mi:x^\mi\in \mathrm{FV}(\Gamma,\varphi)\}\vdash_{\mathrm{FOL}}\varphi^*.$$

\end{proof}

\begin{corollary}
If $\Gamma\cup\{\varphi\}$ is a set of $\mathcal{L}$-sentences, then
$$
\Gamma\vdash_{\mathrm{MSL}}^{\mathcal{L}}\varphi\quad\text{implies}\quad \Gamma^*,J\vdash_{\mathrm{FOL}}\varphi^*. 
$$
\end{corollary}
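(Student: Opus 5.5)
We need to prove the last statement: the Corollary that for $\mathcal{L}$-sentences, $\Gamma\vdash_{\mathrm{MSL}}\varphi$ implies $\Gamma^*,J\vdash_{\mathrm{FOL}}\varphi^*$.

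We specialise Theorem~\ref{ForwardDirectionFormulas} to the case where $\Gamma\cup\{\varphi\}$ consists of sentences. Given a derivation witnessing $\Gamma\vdash^{\mathcal{L}}_{\mathrm{MSL}}\varphi$, that theorem yields
$$\Gamma^*,J,\{Q_\mi x^\mi:x^\mi\in \mathrm{FV}(\Gamma,\varphi)\}\vdash_{\mathrm{FOL}}\varphi^*.$$
Because every member of $\Gamma\cup\{\varphi\}$ is a sentence, $\mathrm{FV}(\Gamma,\varphi)=\emptyset$. The extra set of sort hypotheses $\{Q_\mi x^\mi:x^\mi\in\mathrm{FV}(\Gamma,\varphi)\}$ is therefore empty, and the displayed derivability statement is literally $\Gamma^*,J\vdash_{\mathrm{FOL}}\varphi^*$.

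A few routine checks remain. First, $(\cdot)^*$ takes sentences to sentences: it creates no new free variables, and each quantifier $\forall x^\mi$ or $\exists x^\mi$ becomes a quantifier over the same variable $x$, so $\Gamma^*\cup\{\varphi^*\}$ consists of $\mathcal{L}^*$-sentences. Second, the axioms in $J$ are universal closures, so they contribute no free variables either. Third, the procedure is effective: the derivation produced in Theorem~\ref{ForwardDirectionFormulas} is built by induction on the MSL derivation, and its only nonlocal step, Lemma~\ref{RemoveQAssumptions}, discharges auxiliary $Q_\mj y$ assumptions explicitly using $\exists E$ against $\exists y\,Q_\mj y\in J$.

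There is no genuine obstacle here. The one subtlety is that the induction inside Theorem~\ref{ForwardDirectionFormulas} passes through intermediate formulas with free variables, since derivations of sentences contain open subderivations. The formula-level statement, which carries the $Q_\mi x^\mi$ hypotheses along, is what makes that induction go through, and for sentences those hypotheses disappear at the end.
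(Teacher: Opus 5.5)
Your proposal is correct and matches the paper's approach: the corollary follows directly from Theorem~\ref{ForwardDirectionFormulas}, because $\mathrm{FV}(\Gamma,\varphi)=\emptyset$ for sentences, so the extra hypotheses $Q_\mi x^\mi$ vanish. Your additional checks (that $(\cdot)^*$ preserves sentencehood, and that the procedure is effective) are harmless but not needed for the implication itself.
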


\section{Proof of Proposition~\ref{LibeqExt}}\label{app:libeq}

\begin{lemma}\label{LibeqLiberalRel}
Let $\mathcal{L}^+$ be obtained from $\mathcal{L}$ by making all relation symbols liberal. Then for all $\mathcal{L}^\approx$-formulas $\Gamma\cup\{\varphi\}$,
$$\Gamma\vdash_{\mathrm{MSL}^\approx}^{\mathcal{L}}\varphi\quad\text{iff}\quad\Gamma\vdash_{\mathrm{MSL}^\approx}^{\mathcal{L}^+}\varphi.$$
\end{lemma}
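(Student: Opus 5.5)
The left-to-right direction is immediate: $\mathcal{L}\leq\mathcal{L}^+$, and every axiom of $\mathrm{Eq}^\approx_\mathcal{L}$ is an axiom of $\mathrm{Eq}^\approx_{\mathcal{L}^+}$ (the relation and function clauses are indexed by sort tuples that are well sorted in $\mathcal{L}$, hence in $\mathcal{L}^+$). So any $\mathcal{L}$-derivation is already an $\mathcal{L}^+$-derivation. For the converse we cannot appeal to Proposition~\ref{ConLan}, because the $\mathrm{Eq}^\approx$ axioms of $\mathcal{L}^+$ are additional hypotheses. Instead I would use a translation that removes the ill-sorted relation atoms. This is the same $\zeta$-trick used in Claim~\ref{claimtwo}.

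For each relation symbol $R$ of $\mathcal{L}$ and terms $t_1,\dots,t_n$ of arbitrary sorts with $(\mathrm{sort}(t_1),\dots,\mathrm{sort}(t_n))\notin\mathrm{rank}(R)$, define
$$R(t_1,\dots,t_n)^\zeta:=\bigvee_{(\mi_1,\dots,\mi_n)\in\mathrm{rank}(R)}\exists y_1^{\mi_1}\cdots\exists y_n^{\mi_n}\big(t_1\approx y_1\land\cdots\land t_n\approx y_n\land R(y_1,\dots,y_n)\big).$$
Leave atoms that are already well sorted in $\mathcal{L}$ unchanged, and let $\zeta$ commute with connectives and quantifiers. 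If $\mathrm{rank}(R)$ is infinite, the disjunction is infinitary. To handle this, I would first use the fact that a derivation mentions only finitely many sorts and restrict to a finite subsignature. Alternatively, I would first pass to $\mathcal{L}^s$ via Proposition~\ref{StrictIsSufficient}-style splitting, so that each ill-sorted atom has a single strict target tuple and one existential block suffices. Since only function ranks matter for terms, $\mathcal{L}$ and $\mathcal{L}^+$ have the same terms. Hence $\psi^\zeta$ is an $\mathcal{L}^\approx$-formula, $\psi^\zeta=\psi$ for $\mathcal{L}^\approx$-formulas, and $(\psi[t/x])^\zeta=\psi^\zeta[t/x]$ when the $y$'s are fresh. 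The last point holds because substitution preserves sorts, so whether an atom is well sorted does not change.

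Next, by induction on derivations, I would show that $\Sigma\vdash^{\mathcal{L}^+}_{\mathrm{MSL}^\approx}\psi$ implies $\Sigma^\zeta\vdash^{\mathcal{L}}_{\mathrm{MSL}^\approx}\psi^\zeta$. The propositional and quantifier steps go through because $\zeta$ commutes with substitution and with the eigenvariable conditions. The equality axioms of $\mathcal{L}^+$ not concerning relations translate to themselves. The main obstacle is the relation-congruence axioms of $\mathrm{Eq}^\approx_{\mathcal{L}^+}$, namely
$$x_1\approx y_1\land\cdots\land x_n\approx y_n\to\big(R(\vec x)\to R(\vec y)\big)$$
with arbitrary sorts. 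Their $\zeta$-images must be derived in $\mathcal{L}$ by cases on whether each side is well sorted:
\begin{itemize}
\item If both sides are well sorted, the image is an $\mathcal{L}$-axiom.
\item If both are ill sorted, from a witness $\vec z$ with $x_k\approx z_k$ we get $y_k\approx z_k$ by symmetry and transitivity, so the same witness works.
\item If the left side is well sorted and the right is ill sorted, take $\vec x$ itself as the witness, of a tuple in $\mathrm{rank}(R)$.
\item If the left side is ill sorted and the right is well sorted, from the witness $\vec z$ with $z_k\approx y_k$ and $R(\vec z)$, apply the $\mathcal{L}$-congruence axiom for $\approx$ between $\vec z$ and $\vec y$, both of sorts in $\mathrm{rank}(R)$.
\end{itemize}
This is the step where transitivity and symmetry of $\approx$ across sorts in $\mathrm{Eq}^\approx_\mathcal{L}$ are essential.

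Applying this to a derivation of $\Gamma\vdash^{\mathcal{L}^+}_{\mathrm{MSL}^\approx}\varphi$, we have $\Gamma^\zeta=\Gamma$ and $\varphi^\zeta=\varphi$. This yields $\Gamma\vdash^{\mathcal{L}}_{\mathrm{MSL}^\approx}\varphi$.
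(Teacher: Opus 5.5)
Your proposal is correct and follows the paper's proof: the same $\zeta$-translation, with the disjunction restricted to the finitely many sorts occurring in the derivation. The only difference is that the paper applies $\zeta_R$ to every $R$-atom and recovers $\Gamma$ and $\varphi$ up to provable equivalence, whereas you leave well-sorted atoms fixed so that $\Gamma^\zeta=\Gamma$ syntactically and instead do a four-case check of the congruence axioms. That check is right, and it also handles the strict $=$-congruence instances of $\mathcal{L}^+$ at tuples outside $\mathrm{rank}(R)$, which you did not list: use $x=y\to x\approx y$ and then your ``both ill sorted'' case.

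Drop the alternative route through $\mathcal{L}^s$, however. Proposition~\ref{StrictIsSufficient} does not carry over to $\mathrm{MSL}^\approx$, because the $\approx$-congruence axioms relate $R$ at different tuples of $\mathrm{rank}(R)$. Moreover, an ill-sorted atom of $\mathcal{L}^+$ has no single strict target symbol, and a single existential block would break your case where the left side is well sorted at a different tuple.
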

\begin{proof} We only prove the right-to-left direction. Fix a derivation $\mathcal{D}$ of $\Gamma\vdash_{\mathrm{MSL}^\approx}^{\mathcal{L}^+}\varphi$, and let $S$ be the set of sorts appearing in $\mathcal{D}$. For $R\in \mathcal{L}$ with arity $n$ and terms $t_1,\dots,t_n$ of any sorts, let
$$\zeta_R(t_1,\dots, t_n):=\bigvee_{(\mi_1, \dots, \mi_n)\in \mathrm{rank}(R)\cap S^n}\exists y_1^{\mi_1}\ldots \exists y_n^{\mi_n}\left(t_1\approx y_1\land \ldots \land t_n\approx y_n\land R(y_1,\ldots, y_n)\right)$$
where each $y_{\mi_k}$ is a fresh variable of sort $\mi_k$. Then $\zeta_R(t_1,\dots, t_n)$ is a well-sorted $\mathcal{L}^\approx$-formula, and
$$\vdash_{\mathrm{MSL}^\approx}^\mathcal{L}R(s_1^{\mi_1},\ldots, s_n^{\mi_n})\leftrightarrow \zeta_R(s_1^{\mi_1},\ldots, s_n^{\mi_n})$$
for all $\mathcal{L}$-terms $s_1^{\mi_1},\ldots, s_n^{\mi_n}$ with $(\mi_1,\ldots, \mi_n)\in \mathrm{rank}(R)\cap S^n.$ 
\smallskip

Now, replace each instance of $R(t_1,\ldots, t_n)$ in $\mathcal{D}$ with $\zeta_R(t_1,\dots, t_n)$. We get an $\mathcal{L}^\approx$-derivation of a formula that is $\mathcal{L}^\approx$-equivalent to $\varphi$ from formulas that are $\mathcal{L}^\approx$-equivalent to $\Gamma$ together with a set of formulas that are derivable from $\mathrm{Eq}^\approx_\mathcal{L}$.

\end{proof}

\libeqext*

\begin{proof} By Lemma~\ref{LibeqLiberalRel}, we can assume that all relation symbols of $\mathcal{L}$ and $\mathcal{L}'$ are liberal. We let $(\cdot)^\circ$ be defined on terms as in the proof of Proposition~\ref{ConLan}. We fix a fresh variable $w^{\mi_0}$ with $\mi_0\in \mathrm{Sort}$ and extend $(\cdot)^\circ$ to formulas as follows:
\begin{align*}
  \bot^\circ &:= \bot, \\
  (t_1^\mi = t_2^\mi)^\circ &:=
    \begin{cases}
      t_1^\circ = t_2^\circ & \text{if } \mi\in\mathrm{Sort}, \\
      \top & \text{else;}
    \end{cases} \\
    (t_1^\mi\approx t_2^\mj)^\circ & :=\begin{cases}
t_1^\circ \approx t_2^\circ &\text{if } \mi\in \mathrm{Sort}\text{ and }\mj\in \mathrm{Sort},\\
\top &\text{if }\mi=\mj\notin \mathrm{Sort},\\
\bot &\text{else;}
\end{cases}\\
  \bigl(R(t_1^{\mi_1},\dots,t_n^{\mi_n})\bigr)^\circ &:=
    \begin{cases}
      R(t_1^\circ,\dots,t_n^\circ) & \text{if $R\in\mathrm{Rel}$ and $(\mi_1,\dots \mi_n)\in \mathrm{rank}(R)$,} \\
      \top & \text{else;}
    \end{cases} \\
  (\psi\star\chi)^\circ &:= \psi^\circ\star\chi^\circ
    \quad\text{for } \star\in\{\land,\lor,\to\}, \\
  (\exists x^\mi\,\psi)^\circ &:=
    \begin{cases}
      \exists x^\mi\,\psi^\circ & \text{if } \mi\in\mathrm{Sort}, \\
      \exists w^{\mi_0}\psi^\circ              & \text{else;}
    \end{cases} \\
  (\forall x^\mi\,\psi)^\circ &:=
    \begin{cases}
      \forall x^\mi\,\psi^\circ & \text{if } \mi\in\mathrm{Sort}, \\
      \forall w^{\mi_0}\psi^\circ               & \text{else.}
    \end{cases}
\end{align*}

Again, $(\cdot)^\circ$ maps $(\mathcal{L}')^\approx$-formulas to $\mathcal{L}^\approx$-formulas, $\psi^\circ = \psi$ for all $\mathcal{L}^\approx$-formulas, and $\bigl(\psi[t^\mi/x^\mi]\bigr)^\circ=\psi^\circ[t^\circ/x]$ whenever $x^\mi$ is distinct from $z_\mi$. We can therefore turn $\mathcal{D}$ into an $\mathcal{L}^\approx$-derivation of $\varphi$ from $\Gamma$ together with $\left(\mathrm{Eq}^\approx_{\mathcal{L}'}\right)^\circ$. What remains is to verify that for each $\chi\in \mathrm{Eq}^\approx_{\mathcal{L}'}$, the formula $\chi^\circ$ is derivable from $\mathrm{Eq}^\approx_\mathcal{L}$. We spell out the two congruence schemas.

\smallskip

Consider the case where $\chi$ is:
$$x_1^{\mi_1}\approx y_1^{\mj_1}\land \ldots \land x_n^{\mi_n}\approx y_n^{\mj_n}\rightarrow \left(R(x_1^{\mi_1},\ldots,x_n^{\mi_n})\to R(y_1^{\mj_1},\ldots,y_n^{\mj_n})\right).$$

If $R\notin\mathcal{L}$, then $(R(x_1^{\mi_1},\ldots,x_n^{\mi_n}))^\circ=\top=(R(y_1^{\mj_1},\ldots,y_n^{\mj_n}))^\circ$. So $\chi^\circ$ is a tautology.

Now suppose that $R\in\mathcal{L}$. If, for some $k$, $\mi_k\neq \mj_k$ and $\mi_k\notin \mathrm{Sort}$ or $\mj_k\notin \mathrm{Sort}$, then $(x_k^{\mi_k}\approx y_k^{\mj_k})^\circ =\bot$. So $\chi^\circ$ is a tautology. If $\mi_k=\mj_k\notin \mathrm{Sort}$ for some $k$, then $(\mi_1,\ldots, \mi_n)\notin \mathrm{rank}(R)\subseteq \mathrm{Sort}^n,$
so $(R(x_1^{\mi_1},\ldots,x_n^{\mi_n}))^\circ=\top=(R(y_1^{\mj_1},\ldots,y_n^{\mj_n}))^\circ.$ Again, $\chi^\circ$ is a tautology. Finally, if $\mi_k,\mj_k\in \mathrm{Sort}$ for each $k$, then $\chi^\circ$ is an instance of the congruence schema in $\mathrm{Eq}^\approx_\mathcal{L}$.

\medskip

Now consider the case where $\chi$ is:
$$x_1^{\mi_1}\approx y_1^{\mj_1}\land \ldots \land x_n^{\mi_n}\approx y_n^{\mj_n}\rightarrow f(x_1^{\mi_1},\ldots,x_n^{\mi_n})\approx f(y_1^{\mj_1},\ldots, y_n^{\mj_n}).$$
Since $f$ is strict, we have that $\mi_k=\mj_k$ for each $k$. If $f\notin \mathcal{L}$, then both $(f(x_1^{\mi_1},\ldots,x_n^{\mi_n}))^\circ$ and $(f(y_1^{\mi_1},\ldots,y_n^{\mi_n}))^\circ$ equal $z_\mi$, where $(\mi_1,\dots,\mi_n,\mi)\in \mathrm{rank}'(f)$. If $\mi\in \mathrm{Sort}$, then $(f(x_1^{\mi_1},\ldots,x_n^{\mi_n})\approx f(y_1^{\mi_1},\ldots,y_n^{\mi_n}))^\circ$ is $z_\mi\approx z_\mi$. If $\mi\notin \mathrm{Sort}$, it is $\top$. In either case, $\chi^\circ$ is derivable from $\mathrm{Eq}^\approx_\mathcal{L}$.

If $f\in\mathcal{L}$, then $\mathrm{rank}'(f)=\mathrm{rank}(f)=(\mi_1,\ldots,\mi_n,\mi)$ with $\mi_1,\ldots,\mi_n,\mi\in\mathrm{Sort}$, since $f$ is strict in both $\mathcal{L}$ and $\mathcal{L}'$. So $\chi^\circ=\chi$.

\end{proof}

\section{Proof of Claim~\ref{FormSubst}}\label{app:FormSubst}
\FormSubst*
\begin{claimproof}[Proof of Claim]
We start by showing by induction on $t$ that for any term $s$, variable $x$, and sort $\ml$ that
$$(t[s/x])^{\langle \ml\rangle} = t^{\langle \ml\rangle}[s^{\langle \mi_1\rangle}/x^{\langle \mi_1\rangle}, \ldots, s^{\langle \mi_m\rangle}/x^{\langle \mi_m\rangle}].$$
If $t=y$ for some variable different from $x$, then $y^{\langle \ml\rangle} \neq x^{\langle \mi_k\rangle}$ for each $k$, so
    $$
        (t[s/x])^{\langle \ml\rangle}=(y[s/x])^{\langle \ml\rangle}=y^{\langle \ml\rangle}
        =y^{\langle \ml\rangle}[s^{\langle \mi_1\rangle}/x^{\langle \mi_1\rangle}, \ldots, s^{\langle \mi_m\rangle}/x^{\langle \mi_m\rangle}]
        =t^{\langle \ml\rangle}[s^{\langle \mi_1\rangle}/x^{\langle \mi_1\rangle}, \ldots, s^{\langle \mi_m\rangle}/x^{\langle \mi_m\rangle}].$$
If $t=x$, then $t^{\langle \ml\rangle}=x^{\langle \ml\rangle}$ and $\ml=\mi_k$ for some $k$, so
$$
    (t[s/x])^{\langle \ml\rangle}=(x[s/x])^{\langle \ml\rangle}=s^{\langle \ml\rangle}
    =x^{\langle \ml\rangle}[s^{\langle \mi_1\rangle}/x^{\langle \mi_1\rangle}, \ldots, s^{\langle \mi_m\rangle}/x^{\langle \mi_m\rangle}]
    =t^{\langle \ml\rangle}[s^{\langle \mi_1\rangle}/x^{\langle \mi_1\rangle}, \ldots, s^{\langle \mi_m\rangle}/x^{\langle \mi_m\rangle}].$$
For the induction step, suppose $t=f(t_1,\ldots, t_n)$ with $\mathrm{rank}(f)=(\mr_1,\ldots,\mr_n,\mr)$. Then
\begin{align*}
    (t[s/x])^{\langle \ml\rangle}&=(f(t_1[s/x],\ldots, t_n[s/x]))^{\langle \ml\rangle}\\
    &=f^{\langle \ml\rangle}((t_1[s/x])^{\langle \mr_1\rangle},\ldots,(t_n[s/x])^{\langle \mr_n\rangle})\\
    &=f^{\langle \ml\rangle}(t_1^{\langle \mr_1\rangle}[s^{\langle \mi_1\rangle}/x^{\langle \mi_1\rangle}, \ldots, s^{\langle \mi_m\rangle}/x^{\langle \mi_m\rangle}],\ldots,t_n^{\langle \mr_n\rangle}[s^{\langle \mi_1\rangle}/x^{\langle \mi_1\rangle}, \ldots, s^{\langle \mi_m\rangle}/x^{\langle \mi_m\rangle}])\\
    &=t^{\langle \ml\rangle}[s^{\langle \mi_1\rangle}/x^{\langle \mi_1\rangle}, \ldots, s^{\langle \mi_m\rangle}/x^{\langle \mi_m\rangle}].
\end{align*}

We now prove the claim by induction on the complexity of $A$. The atomic cases follow from what we just showed, and the cases for the propositional connectives are immediate. Now suppose that $A=\exists yB$ and that $t$ is free for $x$ in $\exists yB$. Then, either $x$ is not free in $A$, or $y$ does not occur in $t$ and $t$ is free for $x$ in $B$.

If $x$ is not free in $A$, then no $x^{\langle \mi_k\rangle}$ is free in $A^\diamond$. Thus, each $t^{\langle \mi_k\rangle}$ is free for $x^{\langle \mi_k\rangle}$ in $A^\diamond$, and $(A[t/x])^\diamond=A^\diamond=A^\diamond[t^{\langle \mi_1\rangle}/x^{\langle \mi_1\rangle}, \ldots, t^{\langle \mi_m\rangle}/x^{\langle \mi_m\rangle}].$

If $y$ does not occur in $t$ and $t$ is free for $x$ in $B$, then none of $y^{\langle \mj_1\rangle},\ldots, y^{\langle \mj_p\rangle}$ occurs in any $t^{\langle \mi_k\rangle}$. By the induction hypothesis, each $t^{\langle \mi_k\rangle}$ is free for $x^{\langle \mi_k\rangle}$ in $B^\diamond$ and $(B[t/x])^\diamond= B^\diamond[t^{\langle \mi_1\rangle}/x^{\langle \mi_1\rangle}, \ldots, t^{\langle \mi_m\rangle}/x^{\langle \mi_m\rangle}]$. Thus, each $t^{\langle \mi_k\rangle}$ is free for $x^{\langle \mi_k\rangle}$ in $A^\diamond$, and 
    $$(A[t/x])^\diamond= \exists y^{\langle \mj_1\rangle}\ldots\exists y^{\langle \mj_p\rangle} \bigl(B^\diamond[t^{\langle \mi_1\rangle}/x^{\langle \mi_1\rangle}, \ldots, t^{\langle \mi_m\rangle}/x^{\langle \mi_m\rangle}]\bigr).$$
    Since no $y^{\langle \mj\rangle}$ occurs in any $t^{\langle \mi_k\rangle}$, 
    \begin{align*}
        \exists y^{\langle \mj_1\rangle}\ldots\exists y^{\langle \mj_p\rangle} \bigl(B^\diamond[t^{\langle \mi_1\rangle}/x^{\langle \mi_1\rangle}, \ldots, t^{\langle \mi_m\rangle}/x^{\langle \mi_m\rangle}]\bigr)&=\bigl(\exists y^{\langle \mj_1\rangle}\ldots\exists y^{\langle \mj_p\rangle} B^\diamond\bigr)[t^{\langle \mi_1\rangle}/x^{\langle \mi_1\rangle}, \ldots, t^{\langle \mi_m\rangle}/x^{\langle \mi_m\rangle}]\\
        &=A^\diamond[t^{\langle \mi_1\rangle}/x^{\langle \mi_1\rangle}, \ldots, t^{\langle \mi_m\rangle}/x^{\langle \mi_m\rangle}].
    \end{align*}
    The case where $A=\forall y\, B$ is identical.
    
\end{claimproof}

\end{document}